\newcommand{\dd}{\,{\rm d}}
\newcommand\R{{\mathbb{R}}}
\renewcommand\P{{\mathbb{P}}}
\newcommand\N{{\mathbb{N}}}
\renewcommand\div{{\rm div}}
\newtheorem{theorem}{Theorem}[section]
\newtheorem{proposition}[theorem]{Proposition}
\newtheorem{lemma}[theorem]{Lemma}
\theoremstyle{definition}
\theoremstyle{remark}
\newtheorem{remark}[theorem]{Remark}
\numberwithin{equation}{section}
\begin{document}

\title[Large time energy growth for the Boussinesq system]
{A short proof of the large time energy growth for the Boussinesq system}

\author{Lorenzo Brandolese}
\address{L. Brandolese: Univ Lyon, Universit\'e Claude Bernard Lyon 1, CNRS UMR 5208, Institut Camille Jordan, 43 blvd. du 11 novembre 1918, F-69622 Villeurbanne cedex, France.}
\email{brandolese{@}math.univ-lyon1.fr}
\urladdr{http://math.univ-lyon1.fr/$\sim$brandolese}

\author{Charafeddine Mouzouni}
\address{C. Mouzouni: Univ Lyon, \'Ecole centrale de Lyon, CNRS UMR 5208, Institut Camille Jordan, 36 avenue Guy de Collonge, F-69134 Ecully Cedex, France.}
\email{mouzouni{@}math.univ-lyon1.fr}

\subjclass[2000]{Primary 76D05; Secondary 35B40}

\date{\today\\ \quad 
This work was performed within the framework of the LABEX MILYON (ANR-10-LABX-0070) of Université de Lyon, within the program ``Investissements d'Avenir'' (ANR-11-IDEX-0007) operated by the French National Research Agency (ANR).
L. Brandolese was also supported by project ANR-13-BS01-0003 - DYFICOLTI - DYnamique des Fluides, Couches Limites, Tourbillons et Interfaces.}

\begin{abstract}

We give a direct proof of the fact that the $L^p$-norms of global solutions of the Boussinesq system
in~$\R^3$ grow large as $t\to\infty$ for $1<p<3$ and decay to zero for $3<p\le\infty$, providing exact estimates from below and above using a suitable decomposition of the space-time space $\R^{+}\times\R^{3}$. In particular,  the kinetic energy blows up as $\|u(t)\|_2^2\sim ct^{1/2}$ for large time.
This constrasts with the case of the Navier--Stokes equations.
\end{abstract}

\keywords{Kato spaces, the Boussinesq equation and asymptotic behavior}

\maketitle


\section{Introduction}

The incompressible Boussinesq system describes the dynamics of an incompressible fluid, taking into account heat exchanges. In addition to the flow, the transport and diffusion of temperature, we have also convection currents created by the vertical force of buoyancy. The Boussinesq approximation consists in neglecting the variations of the density in the continuity equation. Accordingly with this approximation, we also neglect
the local heat source due to the viscous dissipation. Taking all the physical constants equal to $1$ we can write the Boussinesq system in the following form,
\begin{equation}
\label{B} 
\left\{
\begin{aligned}
 &\partial_t \theta +u \cdot \nabla \theta =  \Delta \theta\\
 &\partial_t u +u\cdot\nabla u+\nabla p=\Delta u+\theta e_3\\
  &\nabla\cdot u=0\\
 &u|_{t=0}=u_0,\;\,\theta|_{t=0}=\theta_0
\end{aligned}
\right.
\qquad x\in \R^3, t\in \R_{+}.
\end{equation}
Here $u\colon\R^3\times\R^+\to \R^3$ is the velocity field and the scalar field
$\theta\colon\R^3\times\R^+\to\R$ denotes the temperature. The function $p\colon \R^3\times\R^+\to \R$
is the Lagrange multiplier related to the constraint of incompressibility, 
and is closely related to the pressure of the flow.
Moreover, $e_3=(0,0,1)$ is a constant unit vertical vector.

In the particular case of incompressible Navier Stokes system ($\theta\equiv0$), starting with T. Kato~\cite{Kato} and M.~E.~Schonbek~\cite{Schon84}, many authors studied the decay problem of various $L^p$ norms as $t\to+\infty$.
For example, it is known that if $\|u_0\|_3$ is small enough and if $\int |u_0(x)|(1+|x|)\mathrm{d}x<\infty$, then one has
$$
\Vert u(t)\Vert_{p} \leq Ct^{-2+3/(2p)},\qquad \mbox{for } t>0 \mbox{ and } 1\leq p \leq \infty 
$$
for the solutions of the Navier--Stokes equations.
Asymptotic profiles like those constructed in Y.~Fujigaki and T.~Miyakawa~\cite{Japan} prove the optimality of such decay rates.
In particular this shows that the kinetic energy vanishes when $t\rightarrow \infty$.

The asymptotic behavior of the $L^{p}$-norms for Boussinesq system was expected to be similar to the particular case of the incompressible Navier Stokes system\footnote{Indeed, it was claimed in~\cite{ResulFaux} that $\Vert u(t)\Vert_{2}\rightarrow \infty$ for solutions of Boussinesq system.
However, this article contained an erratum that was pointed out in \cite{BraS12}.}.
But the main result in \cite{BraS12} put in evidence that this is true only if the initial temperature has zero mean: indeed, if the data satisfies appropriate  size and localization conditions then one has, for $t$ large enough,
\begin{equation}\label{ingrech}
c(1+t)^{-1/2+3/2p} \leq \Vert u(t)\Vert_{p} \leq C(1+t)^{-1/2+3/2p}, \qquad \mbox{ for } 1<p\leq \infty,
\end{equation}
with $c>0$ if and only if $\int \theta_{0}\neq 0 $. 
This proves in particular that $\Vert u(t)\Vert_{L^{2}}\sim ct^{1/4}$ if the condition $\int \theta_{0}\neq 0$ is satisfied. 
The drawbacks of the approach of~\cite{BraS12} are that the proof of~\eqref{ingrech} is involved and
the conditions on the data too restrictive. For example, the smallness assumptions needed in
\cite{BraS12} was of the form $\mathrm{ess\,sup}_{\R^3}{|x|\,|u_0(x)|}+\mathrm{ess\,sup}_{\R^3}|x|^3|\theta_0(x)|+\|\theta_0\|_1<\epsilon$.
Moreover, the data were assumed to satisfy additional pointwise estimates.
It does not look natural to put such restrictive \emph{pointwise} conditions for studying the long time behaviour of $L^p$-norms. A similar remark could apply to the conditions proposed in~\cite{Wen16}, where the results of~\cite{BraS12} are extended to higher-order derivatives, but assuming the data in the Schwartz class.

In this paper we aim to give a simpler and shorter proof for \eqref{ingrech} by assuming much 
weaker assumptions on the data and using the natural functional settings, inspired by Kato's classical work \cite{Kato} for the incompressible Navier Stokes system.
The subtle part will be the proof of the lower bounds: the classical approach (see \cite{Japan} and the references therein) of writing
an asymptotic profile for $u$ putting in evidence the leading term as $t\to\infty$ is not applicable
to our situation. Indeed, it turns out that, when $\int\theta_0\not=0$, the linear and the nonlinear terms behave at the same rates in $L^p$.
The main idea will be to split the analysis into different regions of space-time: the size of the linear terms contributing to~$u$ will be more important than the size of the nonlinear terms
in some of these regions, namely, in $\{(x,t)\colon |x|\ge A\sqrt t\}$ provided $A$ is large enough.
A careful linear analysis will finally lead to the lower bound in~\eqref{ingrech}. 

The use of Duoandikoetxea and Zuazua decompositions for distributions, and of variants of Young-type estimates (useful for estimating convolution integrals outside balls of large radii)
are two original features of this paper. 
The crucial technical step is carried in Proposition~\ref{pr:kato-r} and the main result is stated in Theorem~\ref{th:lube}.

\section{The Boussinesq equations}

The integral formulation of the Boussinesq system \eqref{B} reads:
\begin{equation}
 \label{IE}
\left\{
\begin{aligned}
 &\theta(t)=e^{t\Delta}\theta_0-\int_0^t e^{(t-s)\Delta}\nabla \cdot(\theta u)(s)\dd s\\
 &u(t)=e^{t\Delta}u_0-\int_0^t e^{(t-s)\Delta}\P\nabla \cdot(u\otimes u)(s)\dd s
+\int_0^t e^{(t-s)\Delta}\P \theta(s) e_3\dd s.\\
&\nabla \cdot u_0=0
\end{aligned}
\right.
\end{equation}
Here $\P$ denotes the projector on the space of divergence-free fields, which is also called Leray's projector.
We will not discuss here the issue of the equivalence between the system \eqref{IE} and the original
system~\eqref{B}. One could see \cite[Theorems 1.1-1.2]{Lem02} for a discussion of this issue in the particular case of incompressible Navier Stokes system. In all this paper the Boussinesq system will be treated in the integral form above.

Let us write the unknown as ${\bf v}=\displaystyle\binom{u}{\theta}$.  It is convenient to rewrite the above integral system in the following abstract form:
\begin{equation}
\label{absy}
{\bf v}={\bf a}+\bf {B}({\bf v},{\bf v}),
\end{equation}
where ${\bf B}\colon E\times E\to E$ is a bilinear operator in a suitable Banach space~$E$, 
and ${\bf a}\in E$ is given  in terms of the initial data.
Here $E=X\times Y$ where $X$ denotes the Banach space of the velocity and $Y$ the Banach space of the temperature.
We define $X$ to be the space of all $C([0,\infty),L^3)$ divergence-free vector fields~$u$ such that $\|u\|_X<\infty$, and $Y$ the space of all $C([0,\infty),L^1)$ functions such that $\|\theta\|_Y<\infty$.
Here,
\begin{align}
 & \|u\|_X\equiv \sup_{t>0}\|u(t)\|_3+\sup_{t>0}\sqrt t\|u(t)\|_\infty,\\
 & \|\theta\|_Y\equiv\sup_{t>0}\|\theta(t)\|_1 + \sup_{t>0} t^{3/2} \|\theta(t)\|_\infty.
\end{align}

To write~\eqref{IE} in the form~\eqref{absy}, replace the equation of~$\theta$ inside 
$\int_0^t e^{(t-s)\Delta}\P \theta(s) e_3\dd s$.
A simple computation transforms~\eqref{IE} into the equivalent system
\begin{equation}
 \label{IEE}
\left\{
\begin{aligned}
 &\theta(t)=e^{t\Delta}\theta_0-\int_0^t e^{(t-s)\Delta}\nabla \cdot(\theta u)(s)\dd s\\
 &u(t)=e^{t\Delta}[u_0+t\P \theta_0e_3]
 -\int_0^t e^{(t-s)\Delta}(t-s)\P\nabla\cdot(\theta u)(s)\dd s\,\,e_3
 -\int_0^t e^{(t-s)\Delta}\P\nabla \cdot(u\otimes u)(s)\dd s\\
&\nabla \cdot u_0=0
\end{aligned}
\right.
\end{equation}
This system has clearly the form~\eqref{absy}, with
\begin{equation}
\label{defa}
 {\bf a}=\binom{e^{t\Delta}[u_0+t\P\theta_0e_3]}{e^{t\Delta}\theta_0}
\end{equation}
and
\begin{equation*}
\label{defbb}
{\bf B}({\bf v},{\bf \tilde v})=\binom{-B_1(u,\tilde u)+B_2(u,\tilde\theta)}{-B_3(u,\tilde\theta)}.
\end{equation*}
Here, ${\bf v}=\displaystyle\binom{u}{\theta}$, $\tilde{\bf v}=\displaystyle\binom{\tilde u}{\tilde\theta}$, and  the three bilinear operators 
$B_1\colon X\times X\to X$, next $B_2\colon X\times Y\to X$ and $B_3\colon X\times Y\to Y$,
are defined by
\begin{subequations}
\begin{align}
\label{B1}
 &B_1(u,\tilde u)=\int_0^t e^{(t-s)\Delta}\P\nabla\cdot(u\otimes \tilde u)(s)\dd s,\\
 \label{B2}
 &B_2(u,\theta)=\biggl(\int_0^t e^{(t-s)\Delta}(t-s)\P\nabla\cdot(u\theta)(s)\dd s\biggr)e_3,\\
 \label{B3}
 &B_3(u,\theta)=\int_0^t e^{(t-s)\Delta}\nabla\cdot(u\theta )(s)\dd s.
\end{align}
\end{subequations}

We will make  of use the following notations for the standard gaussian
\begin{align*}
 &G_t(x)=\frac{e^{-|x|^2/(4t)}}{(4\pi t)^{3/2}}.
\end{align*}

The starting point of our analysis will be the following theorem:

\begin{theorem}
\label{theosmall}
\hfill
\begin{enumerate}
\item
There exist two constants $\epsilon>0$ and $C>0$ such that if $\theta_0\in L^1(\R^3)$, $u_0\in L^3(\R^3)$, and
\begin{equation}
 \label{smallass}
 \|u_0\|_3+\|\theta_0\|_1<\epsilon,
\end{equation}
then there exist a unique solution $(u,\theta)\in X\times Y$ of the integral system~\eqref{IE} such that
\begin{equation*}
 \|u\|_X+\|\theta\|_Y\le C\epsilon.
\end{equation*}
\item
If $1<p<3$ and under the additional conditions $u_0\in L^p$ and $\|u_0\|_3<\epsilon_p$ for some 
$0<\epsilon_p\le \epsilon$, then the solution constructed in the previous item satisfies
\begin{equation}
\label{ubep}
\sup_{t>0} (1+t)^{\frac{1}{2}(1-\frac{3}{p})}\|u(t)\|_p <\infty.
\end{equation}
If $3\le p\le \infty$, then $\sup_{t>0}t^{\frac{1}{2}(1-\frac{3}{p})}\|u(t)\|_p <\infty$ without any additional assumption on~$u_0$.
 \end{enumerate}
\end{theorem}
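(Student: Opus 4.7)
The plan is to solve the abstract equation~\eqref{absy} by Picard iteration in $E=X\times Y$, and then to extract the additional $L^{p}$-estimates of item~(2) by bootstrapping the solution so obtained. For~(1) I need the two standard ingredients: the linear bound $\|{\bf a}\|_{E}\lesssim \|u_{0}\|_{3}+\|\theta_{0}\|_{1}$ and the bilinear bound $\|{\bf B}({\bf v},\tilde{\bf v})\|_{E}\lesssim \|{\bf v}\|_{E}\|\tilde{\bf v}\|_{E}$, after which a standard contraction lemma in a Banach space produces the unique small solution.

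\textbf{Proof of (1).} For the linear part, $e^{t\Delta}u_{0}$ satisfies $\|e^{t\Delta}u_{0}\|_{3}\le \|u_{0}\|_{3}$ and, by the $L^{3}\to L^{\infty}$ decay of the heat kernel, $\sqrt{t}\|e^{t\Delta}u_{0}\|_{\infty}\lesssim \|u_{0}\|_{3}$; the analogous $L^{1}\to L^{1}$ and $L^{1}\to L^{\infty}$ bounds give $\|e^{t\Delta}\theta_{0}\|_{Y}\lesssim \|\theta_{0}\|_{1}$. The only non-obvious piece is $t\,e^{t\Delta}\P\theta_{0} e_{3}$: I apply the Oseen-type estimate $\|e^{t\Delta}\P g\|_{q}\lesssim t^{-\frac{3}{2}(1-\frac{1}{q})}\|g\|_{1}$ with $q=3,\infty$ and see that the prefactor $t$ exactly compensates the time power, yielding $\|t\,e^{t\Delta}\P\theta_{0} e_{3}\|_{X}\lesssim \|\theta_{0}\|_{1}$. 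For the bilinear part, $B_{1}$ and $B_{3}$ are handled by Kato's classical argument: interpolating $u\in X$ between $L^{3}$ and $L^{\infty}$ one has $\|u(s)\|_{q}\lesssim s^{-\frac{1}{2}(1-\frac{3}{q})}\|u\|_{X}$ for $q\in[3,\infty]$, which feeds into Young's inequality for the kernel of $e^{(t-s)\Delta}\P\nabla$ convolved with $u\otimes\tilde u$ or $u\theta$; the resulting beta-function integral converges and reproduces the scaling of $X$ and $Y$. For $B_{2}$ the extra factor $(t-s)$ is absorbed by the stronger $L^{\infty}$-decay of $\theta$ encoded in the $Y$-norm.

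\textbf{Proof of (2) and main obstacle.} For $3\le p\le\infty$ no extra hypothesis is needed: interpolating $\|u(t)\|_{3}\le \|u\|_{X}$ against $\sqrt{t}\|u(t)\|_{\infty}\le \|u\|_{X}$ directly gives $t^{\frac{1}{2}(1-\frac{3}{p})}\|u(t)\|_{p}\lesssim \|u\|_{X}$. For $1<p<3$ I estimate separately each of the three terms composing $u(t)$ in~\eqref{IEE}. The contribution $e^{t\Delta}u_{0}$ is bounded by $\|u_{0}\|_{p}$ through Young's inequality, which explains the extra hypothesis $u_{0}\in L^{p}$. The buoyancy term produces the optimal rate
\[
 \|t\,e^{t\Delta}\P\theta_{0} e_{3}\|_{p} \lesssim t^{\,1-\frac{3}{2}(1-\frac{1}{p})}\|\theta_{0}\|_{1} = t^{-\frac{1}{2}(1-\frac{3}{p})}\|\theta_{0}\|_{1},
\]
which is slowly growing when $p<3$. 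The bilinear contributions $B_{1}(u,u)$ and $B_{2}(u,\theta)$ are treated by the same beta-function calculation as in~(1), but with the target exponent $p$ instead of~$3$. The main technical obstacle I anticipate is the $B_{2}$ estimate in the range $1<p<3$: the extra $(t-s)$ slows the decay of $e^{(t-s)\Delta}\P\nabla$ and pushes the integrable exponent close to the endpoint, so to recover precisely the weight $t^{-\frac{1}{2}(1-\frac{3}{p})}$ one must use the full strength of the $Y$-norm (both $\|\theta(s)\|_{1}\lesssim 1$ and $\|\theta(s)\|_{\infty}\lesssim s^{-3/2}$) via a careful Hölder interpolation of $u\theta$ at the right Lebesgue exponent; convergence at $s=t$ is precisely what forces $p<3$. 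Once all terms share the common bound $t^{-\frac{1}{2}(1-\frac{3}{p})}$, the smallness of $\|{\bf v}\|_{E}$ absorbs the nonlinear coupling and~\eqref{ubep} follows.
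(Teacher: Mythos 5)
Your Part 1 follows the paper's route exactly (heat/Oseen bounds for ${\bf a}$, Kato-type bilinear continuity on $E=X\times Y$, contraction mapping), and the interpolation argument for $3\le p\le\infty$ is also what the paper uses. The gap is in Part 2 for $1<p<3$. The claim that $B_1(u,u)$ can be bounded in $L^p$ ``by the same beta-function calculation as in (1), with target exponent $p$'' breaks down when $1<p<3/2$: Young's inequality $\|F(t-s)*(u\otimes u)(s)\|_p\le\|F(t-s)\|_r\|(u\otimes u)(s)\|_q$ with $1+\frac1p=\frac1r+\frac1q$ requires $r\ge1$, hence $q\le p$, while the $X$-norm only gives $u(s)\in L^3\cap L^\infty$ and therefore $(u\otimes u)(s)\in L^q$ only for $q\ge 3/2$. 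For $p<3/2$ these constraints are incompatible, so you are forced to feed $L^p$-information on $u$ itself back into the bilinear term (pairing $\|u(s)\|_p$ with $\|u(s)\|_3$ or $\|u(s)\|_\infty$), and the estimate becomes self-referential. Incidentally, the term you single out as the main obstacle, $B_2(u,\theta)$, is in fact harmless with only the $E$-norms: take $q=1$, $r=p$, so that $(t-s)\|F(t-s)\|_p\le C(t-s)^{-1+\frac{3}{2p}}$ and $\|u(s)\theta(s)\|_1\le\|u(s)\|_3\|\theta(s)\|_{3/2}\le C s^{-1/2}$, and the time integral converges for every $1<p\le3$; nothing in $B_2$ ``forces $p<3$''.

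Once the estimate is self-referential, your closing step — ``the smallness of $\|{\bf v}\|_E$ absorbs the nonlinear coupling'' — is not justified as stated: absorbing a term $C\epsilon\,\sup_{t>0}(1+t)^{\frac12(1-\frac3p)}\|u(t)\|_p$ into the left-hand side is legitimate only if that supremum is already known to be finite, and for the solution constructed in $E$ there is no a priori reason why $u(t)$ should even belong to $L^p$ for $p<3$. The paper resolves exactly this point: it proves the mixed bilinear bound \eqref{2estp}, i.e. $\|{\bf B}({\bf v},\tilde{\bf v})\|_{E_p}\le C_p\|{\bf v}\|_{E_p}\|\tilde{\bf v}\|_E$ with only one factor measured in $E_p$, deduces that the Picard iterates ${\bf v}_n$ remain bounded in $E_p$ when $u_0\in L^p$ and $\|u_0\|_3\le\epsilon_p$ is small, and then uses that balls of $E_p$ are closed subsets of $E$ (Fatou) to conclude that the limit solution lies in $E_p$, which is \eqref{ubep}. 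To repair your argument you need this iteration-plus-closedness device, or an equivalent substitute (e.g. local-in-time propagation of the $L^p$ bound together with a continuity argument) that guarantees the quantity you absorb is finite before you absorb it.
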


\medskip
In this functional setting the proof becomes quite standard:
the first conclusion of the 
theorem relies on the following estimates, implying the continuity of the bilinear operator~${\bf B}$
on the Banach space $E$.

\begin{lemma}
\label{lemma-est}
The following estimates hold for any $u,\tilde u\in X$ and $\theta,\tilde \theta\in Y$:
\begin{equation}
\label{3est}
\begin{aligned}
&\|B_1(u,\tilde u)\|_X\le C\|u\|_X\|\tilde u\|_{X},\\
&\|B_2(u,\theta)\|_X \le C\|u\|_X\|\theta\|_Y,\\
&\|B_3(u,\tilde\theta)\|_Y \le C\|u\|_X\|\tilde \theta\|_Y.
\end{aligned}
\end{equation}
In particular, for all ${\bf v},\tilde{\bf v}\in E$, the following estimate holds:
\begin{equation}
\label{coesb}
\|{\bf B}({\bf v},\tilde{\bf v})\|_E\le C\|{\bf v}\|_E\|\tilde {\bf v}\|_E.
\end{equation}
\end{lemma}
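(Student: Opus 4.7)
The plan is to combine two standard ingredients. First, dispersive bounds for the Oseen operator and for $e^{t\Delta}\nabla\cdot$: for $1\le q\le p\le \infty$,
\[
\|e^{t\Delta}\P\nabla\cdot f\|_p+\|e^{t\Delta}\nabla\cdot f\|_p \le C\, t^{-\frac{1}{2}-\frac{3}{2}(\frac{1}{q}-\frac{1}{p})}\|f\|_q,
\]
which in $\R^3$ follow from the Oseen-kernel decay $|K_t(x)|\lesssim (|x|+\sqrt t)^{-4}$ plus Young's inequality. Second, log-convex interpolation of the two quantities built into $X$ and $Y$, giving $\|u(s)\|_r\le \|u\|_X\, s^{-\frac{1}{2}+\frac{3}{2r}}$ for every $3\le r\le \infty$, and $\|\theta(s)\|_r\le \|\theta\|_Y\, s^{-\frac{3}{2}+\frac{3}{2r}}$ for every $1\le r\le \infty$.

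Given these, each of the six bounds (two for each $B_i$, corresponding to the two seminorms defining $X$ or $Y$) reduces to a scaled beta integral of the form $\int_0^t (t-s)^{-\alpha}s^{-\beta}\,ds=Ct^{1-\alpha-\beta}$, and the strategy is always the same: apply H\"older to the bilinear product with a well-chosen exponent, use the two inequalities above, and verify that the output exponent of $t$ matches the target weight. For $B_1$, any intermediate $q\in(3/2,3]$ works for the $L^3$ bound and any $q\in(3,\infty)$ for the weighted $L^\infty$ bound. The $L^1$-part of $B_3$ follows at once from $q=1$, because $\|u(s)\theta(s)\|_1\le \|u\|_X\|\theta\|_Y\,s^{-1/2}$ and $e^{(t-s)\Delta}\nabla\cdot$ acts $L^1\to L^1$ with norm $\sim (t-s)^{-1/2}$; for the $L^3$-part of $B_2$ the extra factor $(t-s)$ in the integrand tames the kernel singularity, so $q=1$ again suffices.

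The main obstacle will be the weighted $L^\infty$ bounds for $B_2$ and $B_3$: no single H\"older exponent yields an integrand whose singularities at both $s=0$ and $s=t$ are simultaneously integrable. The remedy is the standard split $\int_0^t=\int_0^{t/2}+\int_{t/2}^t$, using $q=1$ on the first interval (where $(t-s)\sim t$ absorbs the kernel singularity and only the behavior at $s=0$ constrains the choice) and $q=\infty$ on the second (where $s\sim t$ makes the product decay, and only the diagonal kernel singularity needs care). Both halves contribute the correct power $t^{-3/2}$ needed to match the weighted $L^\infty$ part of the $X$-norm on $B_2$ and of the $Y$-norm on $B_3$. Finally, \eqref{coesb} is immediate from \eqref{3est}: writing ${\bf v}=(u,\theta)$ and $\tilde{\bf v}=(\tilde u,\tilde\theta)$, the $E$-norm of ${\bf B}({\bf v},\tilde{\bf v})$ is bounded by $\|B_1(u,\tilde u)\|_X+\|B_2(u,\tilde\theta)\|_X+\|B_3(u,\tilde\theta)\|_Y$, each term controlled by $\|u\|_X$ times either $\|\tilde u\|_X$ or $\|\tilde\theta\|_Y$, all of which are majorized by $\|{\bf v}\|_E\|\tilde{\bf v}\|_E$.
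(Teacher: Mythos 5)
Your proposal is correct and takes essentially the same route as the paper, which simply cites Kato's estimate for $B_1$ and leaves the remaining bounds to the reader as consequences of H\"older/Young and the kernel properties \eqref{kerF}--\eqref{poifk}; your write-up just fills in those standard beta-integral computations. One small remark: for the weighted $L^\infty$ part of $B_2$ the extra factor $(t-s)$ in fact makes a single exponent $q\in(1,3/2)$ work (the integrand is $(t-s)^{\frac12-\frac{3}{2q}}s^{-2+\frac{3}{2q}}$, integrable at both endpoints), so the split $\int_0^{t/2}+\int_{t/2}^{t}$ is only genuinely forced for $B_3$ --- but the split argument you give is valid in both cases, so this does not affect correctness.
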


In fact, we will establish a slightly more general version of Lemma~\ref{lemma-est}.
For this, let $1<p\le 3$ and let us define the following Banach space through the norm
\begin{equation}
\label{katop}
\|u\|_{X_p}
=\|u\|_X + \sup_{t>0} (1+t)^{\frac{1}{2}(1-\frac{3}{p})}\|u(t)\|_p.
\end{equation}
Notice that $X_3=X$ and for $1<p\le 3$ the space $X_p$ is continuously embedded in~$X$.
Next we set $E_p=X_p\times Y$.

\begin{lemma}
The first two estimates in~\eqref{3est} can be generalized as follows:
\begin{equation}
\label{2estp}
\begin{split}
&\|B_1(u,\tilde u)\|_{X_p}\le C_p\|u\|_{X_p}\|\tilde u\|_{X},\\
&\|B_2(u,\tilde\theta)\|_{X_p} \le C_p\|u\|_{X_p}\|\tilde\theta\|_Y,\\
\end{split}
\qquad
1<p\le 3.
\end{equation}
In particular, we have the estimate
\begin{equation}
\label{coesbb}
\|{\bf B}({\bf v},\tilde{\bf v})\|_{E_p}\le C_p\|{\bf v}\|_{E_p}\|\tilde {\bf v}\|_E,
\qquad 1<p\le 3.
\end{equation}
\end{lemma}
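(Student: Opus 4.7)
The plan is to add the weighted $L^p$-component to the existing $X$-norm estimates of Lemma~\ref{lemma-est}, using standard bounds for the Oseen operator together with elementary time-convolution estimates. The central tool is the scaling estimate
\[
\|e^{\tau\Delta}\P\nabla\cdot f\|_p \le C\,\tau^{-\frac{1}{2}+\frac{3}{2p}-\frac{3}{2r}}\|f\|_r,
\qquad 1\le r\le p\le\infty,
\]
which follows from Young's inequality and the homogeneity of $\P\nabla\cdot G_\tau$. For each bilinear term one selects $r$ so that $f=u\otimes\tilde u$ or $f=u\theta$ is naturally estimated by a product of norms available in $X_p$, $X$, $Y$.

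For $B_1(u,\tilde u)$ we choose $r=p$ and bound $\|(u\otimes\tilde u)(s)\|_p\le\|u(s)\|_p\|\tilde u(s)\|_\infty$. Inserting the $X_p$ control $\|u(s)\|_p\le \|u\|_{X_p}(1+s)^{-\frac12+\frac{3}{2p}}$ and $\|\tilde u(s)\|_\infty\le \|\tilde u\|_X s^{-1/2}$ gives
\[
\|B_1(u,\tilde u)(t)\|_p \le C\|u\|_{X_p}\|\tilde u\|_X\int_0^t (t-s)^{-\frac12}(1+s)^{-\frac12+\frac{3}{2p}} s^{-\frac12}\dd s.
\]
Since $1<p\le 3$ the exponent $-\frac12+\frac{3}{2p}$ is nonnegative, so $(1+s)^{-\frac12+\frac{3}{2p}}\le (1+t)^{-\frac12+\frac{3}{2p}}$ on $0<s<t$, and the remaining beta integral $\int_0^t (t-s)^{-1/2}s^{-1/2}\dd s=\pi$ produces exactly the desired weight, yielding the first line of~\eqref{2estp}.

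For $B_2(u,\tilde\theta)$ the extra factor $(t-s)$ inside the kernel suggests taking $r=1$, and we bound $\|(u\tilde\theta)(s)\|_1\le \|u(s)\|_\infty\|\tilde\theta(s)\|_1$. Using $\|u(s)\|_\infty\le \|u\|_X s^{-1/2}$ and $\|\tilde\theta(s)\|_1\le \|\tilde\theta\|_Y$ we obtain, after the rescaling $s=tu$,
\[
\|B_2(u,\tilde\theta)(t)\|_p \le C\|u\|_X\|\tilde\theta\|_Y\int_0^t (t-s)^{-1+\frac{3}{2p}}s^{-\frac12}\dd s = C_p\,t^{-\frac12+\frac{3}{2p}}\|u\|_X\|\tilde\theta\|_Y.
\]
Because the exponent is nonnegative we have $t^{-\frac12+\frac{3}{2p}}\le (1+t)^{-\frac12+\frac{3}{2p}}$, hence $(1+t)^{\frac12-\frac{3}{2p}}\|B_2(u,\tilde\theta)(t)\|_p\le C_p\|u\|_X\|\tilde\theta\|_Y\le C_p\|u\|_{X_p}\|\tilde\theta\|_Y$, which is the second line of~\eqref{2estp}. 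The combined bound \eqref{coesbb} follows by summing the two displays with the unchanged estimate on $B_3$ from Lemma~\ref{lemma-est}. The only delicate point is the hypothesis $p\le 3$: this is precisely where the two ingredients $(1+s)^{-\frac12+\frac{3}{2p}}\le(1+t)^{-\frac12+\frac{3}{2p}}$ (for $B_1$) and $t^{-\frac12+\frac{3}{2p}}\le(1+t)^{-\frac12+\frac{3}{2p}}$ (for $B_2$) require the exponent to be nonnegative, and it is this restriction on $p$ that ultimately forces us to work in the space $X_p$ rather than $X$ when targeting the sharp growth in Theorem~\ref{theosmall}(2).
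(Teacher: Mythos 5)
Your proposal is correct and follows essentially the same route the paper indicates: the paper's (sketched) proof rests exactly on the Hölder and Young inequalities combined with the scaling and integrability of the kernel $F$ of $e^{t\Delta}\P\nabla$, which is what you use, with the $X$-component inherited from Lemma~\ref{lemma-est}. Your exponent choices ($r=p$ for $B_1$, $r=1$ for $B_2$), the beta-integral evaluations, and the observation that $-\tfrac12+\tfrac{3}{2p}\ge0$ for $p\le3$ correctly fill in the details the paper leaves to the reader.
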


\begin{proof}
The first of estimate~\eqref{3est} is due to Kato, see, for example, 
\cite{Mey97}.
The proof of the second and of the third estimates are very similar and they are  left to the reader. 
The generalisation~\eqref{2estp} of the above estimates is straightforward.
The only thing that are needed to establish all these estimates are the 
standard H\"older and Young inequalities, and the well known fact that
the kernel $F(x,t)$ of the operator $e^{t\Delta}\P\nabla$ satisfies
\begin{equation}
 \label{kerF}
 F(x,t)=t^{-2}F(\textstyle\frac{x}{\sqrt t},1), \qquad\hbox{and}\qquad
 F(\cdot,1)\in L^1\cap L^\infty
\end{equation}
and that the kernel $K(x,t)$ of the operator $e^{t\Delta}\P$ satisfies
\begin{equation}
 \label{kerK}
 K(x,t)=t^{-3/2}K(\textstyle\frac{x}{\sqrt t},1), \qquad\hbox{and}\qquad
 K(\cdot,1)\in \bigcap_{1<p\le\infty} L^p(\R^3).
\end{equation}
Indeed, we recall the well-known pointwise estimates for these kernels (see, e.g., \cite{Miy00})
\begin{equation}
\label{poifk}
 |F(x,1)|\le C(1+|x|)^{-4},\qquad\text{and}\qquad |K(x,1)|\le C(1+|x|)^{-3}.
\end{equation}
\end{proof}

We need also the corresponding linear estimates:

\begin{lemma}
\label{lemlin}
Let $u_0\in L^3(\R^3)$ and $\theta_0\in L^1$. Let ${\bf a}$ be defined as in~\eqref{defa}.
Then, for an absolute constant~$c>0$,
\begin{equation}
\label{lines}
\|{\bf a}\|_E\le c(\|u_0\|_{3}+\|\theta_0\|_1).
\end{equation}
Moreover, if $1<p\le 3$ and we have also $u_0\in L^p(\R^3)$, then ${\bf a}\in E_p$.
\end{lemma}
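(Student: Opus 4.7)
My plan is to decompose ${\bf a}$ into three independent summands and estimate each by direct application of Young's convolution inequality, using the Gaussian $G_t$ for two of them and the Leray-projected heat kernel $K$ from \eqref{kerK}--\eqref{poifk} for the third. Concretely, the temperature slot of ${\bf a}$ is just $e^{t\Delta}\theta_0$, while the velocity slot splits as $e^{t\Delta}u_0+te^{t\Delta}\P\theta_0e_3$. This reduces the lemma to three unrelated convolution estimates.

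The two "Gaussian" summands are standard. Young's inequality against $G_t$, combined with $\|G_t\|_1=1$, $\|G_t\|_{3/2}\lesssim t^{-1/2}$ and $\|G_t\|_\infty\lesssim t^{-3/2}$, gives
\[
\|e^{t\Delta}\theta_0\|_1\le \|\theta_0\|_1,\qquad \|e^{t\Delta}\theta_0\|_\infty\lesssim t^{-3/2}\|\theta_0\|_1,
\]
so $\|e^{t\Delta}\theta_0\|_Y\lesssim \|\theta_0\|_1$, and similarly
\[
\|e^{t\Delta}u_0\|_3\le \|u_0\|_3,\qquad \sqrt t\,\|e^{t\Delta}u_0\|_\infty\lesssim \|u_0\|_3,
\]
so $\|e^{t\Delta}u_0\|_X\lesssim \|u_0\|_3$. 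Both add up to the $\|\theta_0\|_1$ and $\|u_0\|_3$ parts of the desired bound on $\|{\bf a}\|_E$.

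The genuinely new point is the buoyancy piece $te^{t\Delta}\P\theta_0e_3$, which is the main obstacle because of the growing prefactor $t$; the whole point is that it is precisely balanced by the decay of $K(\cdot,t)$. From \eqref{poifk} one has $K(\cdot,1)\in L^q$ for every $q>1$, and the scaling in \eqref{kerK} yields $\|K(\cdot,t)\|_q\lesssim t^{-\frac{3}{2}(1-1/q)}$. Applying Young with $\theta_0\in L^1$ for $q=3$ and $q=\infty$ respectively gives
\[
\|te^{t\Delta}\P\theta_0\|_3\lesssim t\cdot t^{-1}\|\theta_0\|_1=\|\theta_0\|_1,\qquad \|te^{t\Delta}\P\theta_0\|_\infty\lesssim t\cdot t^{-3/2}\|\theta_0\|_1=t^{-1/2}\|\theta_0\|_1,
\]
i.e. the contribution to $\|\cdot\|_X$ is again controlled by $\|\theta_0\|_1$. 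Combining the three estimates yields $\|{\bf a}\|_E\le c(\|u_0\|_3+\|\theta_0\|_1)$.

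For the second assertion, under $u_0\in L^p$ with $1<p\le 3$ it remains to bound $(1+t)^{(1-3/p)/2}\|\cdot\|_p$ for the two velocity summands. For $e^{t\Delta}u_0$, Young's inequality gives $\|e^{t\Delta}u_0\|_p\le \|u_0\|_p$, and the weight $(1+t)^{(1-3/p)/2}\le 1$ makes this uniformly finite. For $te^{t\Delta}\P\theta_0e_3$, taking $q=p$ in the previous computation produces $\|te^{t\Delta}\P\theta_0\|_p\lesssim t^{(3/p-1)/2}\|\theta_0\|_1$, and then
\[
(1+t)^{(1-3/p)/2}\,t^{(3/p-1)/2}=\bigl(t/(1+t)\bigr)^{(3/p-1)/2}\le 1,
\]
so the weighted $L^p$ norm is uniformly bounded. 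Hence ${\bf a}\in E_p$, completing the proof.
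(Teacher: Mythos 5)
Your proposal is correct and is essentially the paper's own argument: the paper disposes of this lemma in one line by invoking the standard heat kernel estimates together with the mapping properties of $\P$, and your computation simply carries this out explicitly, using the scaling and integrability of the kernel $K(\cdot,t)$ of $e^{t\Delta}\P$ from \eqref{kerK}--\eqref{poifk} to handle the buoyancy term $t\,e^{t\Delta}\P\theta_0e_3$ (which is exactly the intended mechanism, and is needed at the $L^\infty$ endpoint where $\P$ alone is not bounded). All exponents check out, including the cancellation $t\cdot t^{-\frac32(1-1/q)}$ for $q=3,\infty,p$ and the bound $(1+t)^{\frac12(1-3/p)}t^{\frac12(3/p-1)}\le 1$ for $1<p\le 3$.
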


The conclusion of~Lemma~\ref{lemlin} follows immediately from the usual properties of the heat kernel, and from the fact
that $\P$ is a bounded operator in $L^p$, for $1<p<\infty$.

\medskip

\begin{proof}[Proof of Theorem~\ref{theosmall}]
Estimates~\eqref{coesb}-\eqref{lines} and the standard fixed point Lemma (see, for example,
\cite{Lem02}) imply Part~1 of the theorem. The solution is obtained as the limit in~$E$ of the sequence~$({\bf v}_n)$ 
recursively defined by ${\bf v}_0={\bf a}$ and ${\bf v}_{n+1}={\bf a}+{\bf B}({\bf v}_n,{\bf v}_n)$, $n\in\N$.

Take now $1<p\le 3$.
Lemma~\ref{lemlin},  and estimate~\eqref{coesbb}, imply that the sequence of approximating solutions ${\bf v}_n$ remains bounded in~$E_p$, provided that $u_0\in L^p(\R^3)$, $\|u_0\|_3\le \epsilon_p$ and $\epsilon_p$ is small enough.
But the  balls 
$\{{\bf v}\in E_p\colon \|{\bf v}\|_{E_p}\le R\}$ are closed subsets of~$E$ (this last claim follows from Fatou's Lemma) and so the solution must belong to~$E_p$
This establishes Part~2 of the theorem.
\end{proof}

\section{Analysis of the solution in the region $|x|\ge A \sqrt t$}

The basic Young $L^p-L^q$ convolution estimates read
\begin{equation}
\label{young}
\|f*g\|_p\le \|f\|_r\|g\|_q, \qquad 1+\frac1p=\frac1r+\frac1q, \quad 1\le p,q,r\le\infty. 
\end{equation}

Here we are interested in the following variant of~\eqref{young}, that provides more information about the
behavior of $f*g(x)$ as $x\to\infty$ in the $L^p$ sense.
Notice that estimate~\eqref{young-r} below boils down (excepted for the unimportant coefficient~$2$)
to~\eqref{young} in the particular case $R=0$.

\begin{proposition}
\label{prop:yo-r}
For $R\ge0$, let $B_R=\{x\in\R^3\colon |x|<R\}$ and $B_R^c$ its complementary in~$\R^3$.
Let also $1\le p,q,r,\tilde r,\tilde q\le\infty$, such that
\[
1+\frac1p=\frac1r+\frac1q=\frac1{\tilde r}+\frac1{\tilde q}.
\]
Then,
\begin{equation}
\label{young-r}
\|f*g\|_{L^p(B_R^c)}
\le 2\Bigl(\|f\|_{L^r(B_{R/2}^c)}\|g\|_q 
     +\|f\|_{\tilde r}\|g\|_{L^{\tilde q}(B_{R/2}^c)}\Bigr).
\end{equation}
\end{proposition}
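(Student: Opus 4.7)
The plan is to reduce~\eqref{young-r} to the classical Young inequality~\eqref{young} via a geometric splitting of the integration variable. The key observation is elementary: if $|x|\ge R$ and $|x-y|<R/2$, then necessarily $|y|\ge |x|-|x-y|>R/2$, so the set $\{|x-y|<R/2\}\cap\{|y|<R/2\}$ is empty whenever $|x|\ge R$. Accordingly, set $f_1=f\,\chi_{B_{R/2}^c}$ and $g_1=g\,\chi_{B_{R/2}^c}$ and decompose $1=\chi_{|x-y|\ge R/2}+\chi_{|x-y|<R/2}$ inside the convolution integral. The first piece yields $(|f_1|*|g|)(x)$, while the second is bounded above by $(|f|*|g_1|)(x)$ when $|x|\ge R$, thanks to the geometric observation. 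Hence, pointwise on $B_R^c$,
\[
|(f*g)(x)|\le (|f_1|*|g|)(x)+(|f|*|g_1|)(x).
\]

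I then take the $L^p(B_R^c)$-norm of both sides, apply the triangle inequality, and bound each convolution by the classical Young estimate~\eqref{young} with the respective conjugate pairs $(r,q)$ and $(\tilde r,\tilde q)$:
\[
\|f*g\|_{L^p(B_R^c)}\le \|f_1\|_r\,\|g\|_q + \|f\|_{\tilde r}\,\|g_1\|_{\tilde q}
 = \|f\|_{L^r(B_{R/2}^c)}\,\|g\|_q + \|f\|_{\tilde r}\,\|g\|_{L^{\tilde q}(B_{R/2}^c)}.
\]
This is actually slightly stronger than~\eqref{young-r}; the factor $2$ in the statement is just a harmless margin, convenient in later applications where one prefers to bound the right-hand side by twice the larger of the two terms.

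I do not anticipate any real obstacle. The whole content of the proposition is captured by the geometric splitting of the integration domain into the two far-field regions $\{|x-y|\ge R/2\}$ and $\{|y|\ge R/2\}$, which together cover $\R^3$ as soon as $|x|\ge R$; after that, everything reduces to a direct application of~\eqref{young}. The only conceptual care needed is to make sure that, in each of the two resulting terms, only \emph{one} of the factors carries the far-field restriction, so that the stated norms $\|f\|_{L^r(B_{R/2}^c)}$ and $\|g\|_{L^{\tilde q}(B_{R/2}^c)}$ appear on the right-hand side.
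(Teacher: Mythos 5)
Your proof is correct and follows essentially the same route as the paper: both rest on the observation that the near-near interaction (where $|x-y|<R/2$ and $|y|<R/2$) cannot reach $B_R^c$, followed by the classical Young inequality applied with the two exponent pairs $(r,q)$ and $(\tilde r,\tilde q)$. Your two-piece splitting of the integration variable versus the paper's four-term decomposition $f=f1_{B_{R/2}}+f1_{B_{R/2}^c}$, $g=g1_{B_{R/2}}+g1_{B_{R/2}^c}$ is only a cosmetic difference, and like the paper you in fact obtain the slightly stronger bound without the factor $2$.
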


\begin{proof}
We decompose $f=f1_{B_{R/2}}+f1_{B_{R/2}^c}$ and $g=g1_{B_{R/2}}+g1_{B_{R/2}^c}$,
where $1_A$ denotes the indicator function of the set $A$.
So $f*g$ is naturally written as the sum of four terms.
As the support of $(f1_{B_{R/2}})*(g1_{B_{R/2}})$ is contained in $B_R$, 
its $L^p(B_R^c)$-norm is zero.
Applying the triangle inequality, next the classical Young inequality to the three other remaining terms,
\[
\|f*g\|_{L^p(B_R^c)} \le 
\|f\|_{L^r(B_{R/2}^c)}\bigl(\|g\|_{L^q(B_{R/2})} + \|g\|_{L^q(B_{R/2}^c)}\bigr)
+\|f\|_{\tilde r} \|g\|_{L^{\tilde q}(B_{R/2}^c)},
\]
which is even slightly stronger than~\eqref{young-r}.
\end{proof}

\begin{lemma}
\label{lem:linkato-r}
There exist positive constants $C_0$, $\eta$ and $A_0$ such that, for all $A\ge A_0$, 
\begin{equation*}
\limsup_{t\to+\infty}
 \Vert e^{t\Delta}\theta_{0} \Vert_{L^1(B^c_{A\sqrt t})} \leq C_{0}e^{-\eta A^{2}}\Vert \theta_{0}\Vert_{L^{1}},
\end{equation*}
\begin{equation*}
\limsup_{t\to+\infty}
 \|e^{t\Delta}u_0+t\,e^{t\Delta}\P \theta_0e_3\|_{L^3(B^c_{A\sqrt t})}
 \le C_0 A^{-2}\Bigl(\|u_0\|_3+\|\theta_0\|_1\Bigr)
\end{equation*}
and
\begin{equation*}
\limsup_{t\to+\infty} 
\sqrt t \|e^{t\Delta}u_0+t\,e^{t\Delta}\P\theta_0 e_3\|_{L^\infty(B_{A\sqrt t}^c)}
 \le C_0A^{-3}\Bigl(\|u_0\|_3+\|\theta_0\|_1\Bigr).
\end{equation*}
\end{lemma}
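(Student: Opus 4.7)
The plan is to handle each of the three estimates by applying the Young-type inequality of Proposition~\ref{prop:yo-r} to the relevant convolutions: $G_t*\theta_0$, $G_t*u_0$ and $(tK(\cdot,t))*\theta_0$, where $K(\cdot,t)$ is the kernel of $e^{t\Delta}\P$. In each case, Proposition~\ref{prop:yo-r} splits $\|f*g\|_{L^p(B_{A\sqrt t}^c)}$ into two pieces: one in which the kernel is restricted to $B_{A\sqrt t/2}^c$ (and contributes the explicit decay in $A$), and one in which the datum is restricted to $B_{A\sqrt t/2}^c$. Since $u_0\in L^3$ and $\theta_0\in L^1$, dominated convergence gives $\|u_0\|_{L^3(B_{A\sqrt t/2}^c)}\to0$ and $\|\theta_0\|_{L^1(B_{A\sqrt t/2}^c)}\to0$ as $t\to\infty$, so the data-restricted pieces vanish in the limsup and only the kernel-restricted pieces survive.

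For the first inequality, take $p=q=r=\tilde r=\tilde q=1$. The scaling $y=x/\sqrt t$ yields
\[
 \|G_t\|_{L^1(B^c_{A\sqrt t/2})}=\int_{|y|>A/2}\frac{e^{-|y|^2/4}}{(4\pi)^{3/2}}\dd y\le C_0e^{-\eta A^2},
\]
for some absolute $\eta>0$, and the bound follows immediately. The estimate for $G_t*u_0$ in $L^3(B^c_{A\sqrt t})$ uses Proposition~\ref{prop:yo-r} with $r=1$, $q=3$, $p=3$; the same Gaussian tail estimate gives a bound of the form $C_0e^{-\eta A^2}\|u_0\|_3$, which is majorised by $C_0A^{-2}\|u_0\|_3$ for $A$ large enough. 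For the $L^\infty$ estimate on $G_t*u_0$, apply Proposition~\ref{prop:yo-r} with $r=3/2$, $q=3$, $p=\infty$; scaling yields $\|G_t\|_{L^{3/2}(B^c_{A\sqrt t/2})}\le C_0 t^{-1/2}e^{-\eta A^2}$, so multiplying by $\sqrt t$ absorbs the $t$-dependence and the exponential in $A$ dominates $A^{-3}$.

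The core of the argument is the buoyancy term $te^{t\Delta}\P\theta_0$, whose kernel is $H_t(x):=tK(x,t)=t^{-1/2}K(x/\sqrt t,1)$, using \eqref{kerK}. Via the same change of variables and the pointwise bound $|K(y,1)|\le C(1+|y|)^{-3}$ from~\eqref{poifk}, one computes
\[
 \|H_t\|_{L^3(B^c_{A\sqrt t/2})}^3=\int_{|y|>A/2}|K(y,1)|^3\dd y\le C\int_{A/2}^\infty\rho^{-7}\dd\rho\le CA^{-6},
\]
hence $\|H_t\|_{L^3(B^c_{A\sqrt t/2})}\le C_0A^{-2}$, and analogously $\sqrt t\,\|H_t\|_{L^\infty(B^c_{A\sqrt t/2})}\le C_0A^{-3}$. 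Applying Proposition~\ref{prop:yo-r} to $H_t*\theta_0$ with $(r,q,p)=(3,1,3)$ and with $(r,q,p)=(\infty,1,\infty)$ respectively, and taking limsups as before, yields the required $A^{-2}\|\theta_0\|_1$ and $A^{-3}\|\theta_0\|_1$ contributions.

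There is no major obstacle: once Proposition~\ref{prop:yo-r} is granted, the proof is entirely routine. The delicate bookkeeping item is matching the powers of $A$ to the statement. The exponent $-3$ in the pointwise bound on $K(\cdot,1)$ is exactly what produces $A^{-2}$ in $L^3$ and $A^{-3}$ in $L^\infty$ in three space dimensions, while the Gaussian contributions (from the $e^{t\Delta}u_0$ and $e^{t\Delta}\theta_0$ parts) are trivially $O(e^{-\eta A^2})$ and are thus absorbed by any negative power of $A$ for $A\ge A_0$ large enough.
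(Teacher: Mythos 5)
Your proposal is correct and follows essentially the same route as the paper: Proposition~\ref{prop:yo-r} applied to $G_t*\theta_0$, $G_t*u_0$ and $tK(\cdot,t)*\theta_0$ with the same exponent choices, Gaussian tails and the $(1+|x|)^{-3}$ decay of $K(\cdot,1)$ for the kernel-restricted pieces, and dominated convergence to kill the data-restricted pieces. The only difference is cosmetic: the paper treats the buoyancy term for general $p$ and then sets $p=3,\infty$, while you compute the two cases directly.
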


\begin{proof}

We claim that there exist two constants $C,\delta>0$ such that
\begin{equation}
\label{limsup1}
\begin{split}
& \limsup_{t\to+\infty}
 \Vert e^{t\Delta}\theta_{0} \Vert_{L^1(B^c_{A\sqrt t})} \leq Ce^{-\delta A^{2}}\Vert \theta_{0}\Vert_{L^{1}},\quad     
\limsup_{t\to+\infty}\|e^{t\Delta}u_0\|_{L^3(B^c_{A\sqrt t})}
 \le Ce^{-\delta A^2}\|u_0\|_3,\\
&\text{and} \qquad\limsup_{t\to+\infty} \sqrt t \|e^{t\Delta}u_0\|_{L^\infty(B^c_{A\sqrt t})}
 \le Ce^{-\delta A^2}\|u_0\|_3.
\end{split}
\end{equation}
To see this, apply Proposition~\ref{prop:yo-r} with $f=G_t$, $g=\theta_0$, $f=G_t$, $g=u_0$, and $(p,r,\tilde r)=(1,1,1)$,
$(p,r,\tilde r)=(3,1,1)$, 
$(p,r,\tilde r)=(\infty,\frac32,\frac32)$ respectively. Then our claim follows by 
the dominated convergence theorem.

Next, recalling the scaling and decay 
properties of the kernel~$K(\cdot,t)$ of the operator $e^{t\Delta}\P$, \eqref{kerK}--\eqref{poifk},
we see that, for $1<r\le\infty$:
\[
\|K(\cdot,t)\|_{L^r(B_{A\sqrt t}^c)}\le C (A\sqrt t)^{-3+3/r}.  
\]
 application of Proposition~\ref{prop:yo-r} with $f=K(\cdot,t)$, $g=\theta_0$ and
$(p,r,\tilde r)=(p,p,p)$, by the dominated convergence theorem
\[
\limsup_{t\to+\infty} t^{\frac12(1-3/p)}\|t\,e^{t\Delta}\P\theta_0 e_3\|_{L^p(B_{A\sqrt t}^c)}
\le C\|\theta_0\|_1  A^{-3+3/p}.  
\] 
Taking here $p=3$ and $p=\infty$ yields the assertion of the Lemma.
\end{proof}

\begin{proposition}
\label{pr:kato-r}
Let $u_0\in L^3(\R^3)$ be a divergence-free vector field, $\theta_0\in L^1(\R^n)$,
such that
\[
\|u_0\|_3+\|\theta_0\|_1<\epsilon'.
\]
If $\epsilon'>0$ is small enough ($\epsilon'$ may need to be smaller than the constant $\epsilon$ in Theorem~\ref{theosmall}),
then there exist two constants $A_0\ge1$ and $\kappa>0$ such that for all $A\ge A_0$ the solution $(u,\theta)$ obtained in Theorem~\ref{theosmall} satisfies
\begin{equation}
\label{limsup0}
\begin{split}
& \limsup_{t\to+\infty} \|\theta(t)\|_{L^1(B^c_{A\sqrt t})}\le \kappa A^{-1},\\
&\limsup_{t\to+\infty} \|u(t)\|_{L^3(B^c_{A\sqrt t})}\le \kappa A^{-2}, \qquad\text{and}\\
&\limsup_{t\to+\infty}  \sqrt t\, \|u(t)\|_{L^\infty(B_{A\sqrt t}^c)}\le \kappa A^{-3}.
\end{split}
\end{equation}
\end{proposition}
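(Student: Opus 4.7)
The plan is to bootstrap the linear tail estimates of Lemma~\ref{lem:linkato-r} through the integral equations~\eqref{IEE}. I would combine the three conclusions into a single quantity
\[
\Lambda_A(t) := A\|\theta(t)\|_{L^1(B^c_{A\sqrt t})} + A^2\|u(t)\|_{L^3(B^c_{A\sqrt t})} + A^3\sqrt t\,\|u(t)\|_{L^\infty(B^c_{A\sqrt t})},
\]
so that the conclusion is equivalent to $\limsup_{t\to\infty}\Lambda_A(t)\le 3\kappa$ uniformly in $A\ge A_0$. By Lemma~\ref{lem:linkato-r}, the contribution of the linear part ${\bf a}$ to $\Lambda_A$ is bounded in limsup by $3C_0(\|u_0\|_3+\|\theta_0\|_1)<3C_0\epsilon'$, uniformly in $A$. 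It therefore suffices to control the bilinear terms $B_1(u,u)$, $B_2(u,\theta)$, $B_3(u,\theta)$ appearing in~\eqref{IEE} in the $L^p(B^c_{A\sqrt t})$ norms that define $\Lambda_A$.

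For each bilinear term I would apply Proposition~\ref{prop:yo-r} inside the time integral with $(p,q,r,\tilde r,\tilde q)$ matched to the target norm, then split $\int_0^t=\int_0^{t/2}+\int_{t/2}^t$. The kernel bounds, which follow from~\eqref{kerF}--\eqref{poifk} by scaling, read
\[
\|K(\cdot,\tau)\|_{L^r(B^c_R)}\le C\tau^{-\frac32+\frac3{2r}}(R/\sqrt\tau)^{-3+\frac3r}, \qquad \|F(\cdot,\tau)\|_{L^r(B^c_R)}\le C\tau^{-2+\frac3{2r}}(R/\sqrt\tau)^{-4+\frac3r},
\]
for $R\ge\sqrt\tau$ and $r>1$, and $\nabla G_\tau$ has Gaussian decay. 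On $s\in[0,t/2]$ one takes the kernel in $L^r(B^c_{A\sqrt t/2})$, which produces an extra factor $A^{-\alpha}$ via the above, and estimates the bilinear factor globally using $\|u\|_X$ and $\|\theta\|_Y$ (together with standard interpolation of $\theta$ between $L^1$ and $L^\infty$). On $s\in[t/2,t]$ one keeps the kernel in full $L^r$ and places the bilinear factor on $B^c_{A\sqrt t/2}$; since $(A/2)\sqrt s\le A\sqrt t/2$ whenever $s\le t$, so that $B^c_{A\sqrt t/2}\subseteq B^c_{(A/2)\sqrt s}$, this tail of $u\otimes u$, $u\theta$ or $\theta$ is dominated by a rescaling of $\Lambda_{A/2}(s)$. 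The remaining $s$-integrals are of beta-function type and converge.

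Assembling everything yields a closed inequality of the form
\[
\limsup_{t\to\infty}\Lambda_A(t)\le C_1\epsilon'+C_2\bigl(\|u\|_X+\|\theta\|_Y\bigr)\sup_{A'\ge A_0}\limsup_{t\to\infty}\Lambda_{A'}(t)+o_A(1),
\]
with $o_A(1)\to0$ as $A\to\infty$. Since $\|u\|_X+\|\theta\|_Y\le C\epsilon'$ by Theorem~\ref{theosmall}, choosing $\epsilon'$ so small that $C_2C\epsilon'\le\tfrac12$ and running the argument on the Picard iterates ${\bf v}_n$ of Theorem~\ref{theosmall} (to sidestep a priori finiteness issues, then passing to the limit via Fatou) yields $\sup_{A\ge A_0}\limsup_{t\to\infty}\Lambda_A(t)\le 2C_1\epsilon'=:3\kappa$. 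The main obstacle is the bookkeeping in the bilinear estimates: one must select the exponents in Proposition~\ref{prop:yo-r} and the Hölder decomposition of $u\otimes u$, $u\theta$ or $\theta$ so that for each of the three target norms the gain in $A$ reaches the prescribed rates $A^{-1}$, $A^{-2}$, $A^{-3}$. The $A^{-1}$ target for $\theta$ is the most delicate, since the linear contribution already decays exponentially in $A^2$: the entire $A^{-1}$ rate has to come from the late-time piece of $B_3(u,\theta)$, where the $\theta$-tail itself is the sole source of $A$-decay.
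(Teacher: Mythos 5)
Your skeleton is the paper's: control the linear part by Lemma~\ref{lem:linkato-r}, the bilinear parts by Proposition~\ref{prop:yo-r} combined with a time splitting and the half-radius observation $B^c_{A\sqrt t/2}\subset B^c_{(A/2)\sqrt s}$, and close by smallness, running the argument on the Picard iterates. But there is a genuine gap in the bookkeeping of your split $\int_0^{t/2}+\int_{t/2}^{t}$. Proposition~\ref{prop:yo-r} produces, at \emph{every} time $s$, two terms: (kernel tail at radius $R/2$)$\times$(global bilinear factor) \emph{and} (global kernel)$\times$(tail of the bilinear factor at radius $R/2$). You cannot assign one term per region: on $[0,t/2]$ the second term is unavoidable, because mass of $u\otimes u$ (or $u\theta$) sitting beyond radius $A\sqrt t/2$ is carried into $B^c_{A\sqrt t}$ by the kernel evaluated near its peak, so no power of $A$ can be extracted from the kernel for that contribution. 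If, as you propose, you then estimate the bilinear factor globally by $\|u\|_X$, $\|\theta\|_Y$, you get for instance, in the $L^3$ estimate of $B_1$, a term $\int_0^{t/2}\|F(t-s)\|_1\|u(s)\|_3\|u(s)\|_\infty\dd s\le C\varepsilon^2$, which decays neither in $t$ nor in $A$; the same happens in the $L^\infty$ and $L^1$ estimates. An $O(\varepsilon^2)$ remainder without the factors $A^{-2}$, $A^{-3}$, $A^{-1}$ ruins the uniform-in-$A$ rates claimed in the proposition, and those rates are exactly what Section~4 needs, since there $A$ is taken large to dominate the constant coming from Lemma~\ref{lem:mainte}.

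The repair is the paper's device: split at a \emph{fixed} time $t_A$ (the time after which the inductive tail bounds are effective), not at $t/2$. On $[0,t_A]$ one estimates everything globally and the whole contribution vanishes as $t\to+\infty$, because the interval has fixed length while $\|F(4t-s)\|_r$ and $\|\tilde F(4t-s)\|_r$ decay in $t$; on $[t_A,4t]$ one keeps \emph{both} terms of Proposition~\ref{prop:yo-r}, using the kernel tail estimate \eqref{est-f-r} for the first and the solution's own tail at radius $\tfrac A2\sqrt{4t}\ge\tfrac A2\sqrt s$ for the second (this is the role of evaluating at time $4t$; your observation for $s\le t$ plays the same role). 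With this split, the recursion on the iterates closes exactly as you intend, giving $\kappa_{n+1}\le C_0\epsilon'+2C\varepsilon^2+2C\varepsilon\kappa_n$, and your remaining steps (uniformity from $2C\varepsilon<1$, passage to the limit along the iterates) are fine. Two minor points: the half-radius step invokes the inductive bound at parameter $A/2$, so your supremum should run over $A'\ge A_0/2$ (harmless, since for bounded $A'$ the quantity $\Lambda_{A'}$ is trivially controlled by the $X\times Y$ norms); and in the $L^1$ estimate for $\theta$ the kernel-tail term of $\tilde F$ already produces the rate $A^{-1}$, so the $\theta$-tail is not the sole source of $A$-decay there.
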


\begin{proof}
Recalling the properties~\eqref{kerF} and \eqref{poifk}
of the kernel $F(x,t)$ of the operator
$e^{t\Delta}\P\nabla$, we obtain the two estimates:
\begin{subequations}
\begin{equation}
\label{est-f}
\|F(t-s)\|_r\le C(t-s)^{-2+3/(2r)},
\end{equation}
and
\begin{equation}
\label{est-f-r}
\begin{split}
\|F(t-s)\|_{L^r(B^c_{A\sqrt t})}
&= (t-s)^{-2+3/(2r)}\Bigl(\int_{|x|\ge A\sqrt t/\sqrt{t-s}}|F(x,1)|^r\dd x\Bigr)^{1/r}\\
&\le CA^{-4+3/r}t^{-2+3/(2r)}.
\end{split}
\end{equation}
\end{subequations}

Consider the sequence of approximate solutions $(u_n,\theta_n)$ $(n\ge1)$, where
\[ 
u_1=e^{t\Delta}u_0+t\,e^{t\Delta}\P \theta_0 e_3, \qquad \theta_1=e^{t\Delta}\theta_0
\] 
and
\begin{equation}
\label{recur}
u_{n+1}=u_1-B_1(u_n,u_n)-B_2(u_n,\theta_n), \qquad \theta_{n+1}=\theta_1-B_3(u_n,\theta_n).
\end{equation}
By the usual fixed point argument the sequence, $(u_n,\theta_n)$ converges to the solution $(u,\theta)$ obtained in Theorem~\ref{theosmall} in the
$(X\times Y)$-norm.
Moreover, there is an absolute constant $C>0$ such that, for all $n\ge1$,
\[
\sup_{n\ge1}(\|u_n\|_X+\|\theta_n\|_Y)\le C\epsilon\equiv\varepsilon,
\]
and so, in particular,
$\|u\|_X+\|\theta\|_Y\le\varepsilon$.

We first need to prove that, for all $n\ge 1$ and some constant $\kappa_n>0$, we have
\begin{subequations}
\begin{equation}
\label{se0}
\limsup_{t\to+\infty} \|\theta_n(t)\|_{L^1(B_{A\sqrt t}^c)} \le \kappa_n A^{-1},
\end{equation}
\begin{equation}
\label{se1}
\limsup_{t\to+\infty} \|u_n(t)\|_{L^3(B_{A\sqrt t}^c)} \le \kappa_n A^{-2}
\end{equation}
and
\begin{equation}
\label{se2}
\limsup_{t\to+\infty} \sqrt t \|u_n(t)\|_{L^\infty(B_{A\sqrt t}^c)} \le \kappa_n A^{-3}.
\end{equation}
\end{subequations}
Let us proceed by induction.
For $n=1$,  \eqref{se0}--\eqref{se2} hold true for $A_{0}$ big enough because of Lemma~\ref{lem:linkato-r}.
Assume now \eqref{se0}--\eqref{se2} hold at the step~$n$ and let us prove their validity at the step
$n+1$.
Let $t_A\ge1$ (with $t_A$ possibly depending also on~$n$) such that for all $t\geq t_{A}$,
\begin{equation}
\|\theta_n(t)\|_{L^1(B_{A\sqrt t}^c)} \le 2\kappa_n A^{-1}, \quad
\|u_n(t)\|_{L^3(B_{A\sqrt t}^c)} \le 2\kappa_n A^{-2} 
\quad\text{and}\quad
\sqrt t\| u_n(t)\|_{L^\infty(B_{A\sqrt t}^c)} \le 2\kappa_n A^{-3}
\end{equation}
We now write
\[
B_3(u_n,\theta_n)(4t)=\biggl(\int_0^{t_A}+\int_{t_A}^{4t}\biggr) \tilde{F}(4t-s)*(u_n \theta_n)(s)\dd s,
\]
\[
B_1(u_n,u_n)(4t)=\biggl(\int_0^{t_A}+\int_{t_A}^{4t}\biggr) F(4t-s)*(u_n\otimes u_n)(s)\dd s
\]
and
\[
B_2(u_n,\theta_n)(4t)=\biggl(\int_0^{t_A}+\int_{t_A}^{4t}\biggr) (4t-s)F(4t-s)*(u_n \theta_n)(s)\dd s
\]
The reason for considering the time $4t$ instead of $t$ will be clear in~\eqref{B1-r} below. 
The kernel $\tilde{F}(x,t)$ of the operator $e^{t\Delta}\nabla$ has similar properties as the kernel $F(x,t)$ and satisfies
\begin{equation}
 \label{kerFF}
 \tilde{F}(x,t)=t^{-2}\tilde{F}(\textstyle\frac{x}{\sqrt t},1), \qquad\hbox{and}\qquad
 \tilde{F}(\cdot,1)\in L^1\cap L^\infty
\end{equation}
Moreover,
\begin{subequations}
\begin{equation}
\label{est-ff}
\|\tilde{F}(t-s)\|_r\le C(t-s)^{-2+3/(2r)},
\end{equation}
and
\begin{equation}
\label{est-ff-r}
\begin{split}
\|\tilde{F}(t-s)\|_{L^r(B^c_{A\sqrt t})}
&= (t-s)^{-2+3/(2r)}\Bigl(\int_{|x|\ge A\sqrt t/\sqrt{t-s}}|\tilde{F}(x,1)|^r\dd x\Bigr)^{1/r}\\
&\le CA^{-4+3/r}t^{-2+3/(2r)}.
\end{split}
\end{equation}
\end{subequations}
Only the three integrals $\int_{t_A}^{4t}$ will play a role as $t\to\infty$. Indeed, for $t>t_A$,
\[
\begin{split}
\biggl\| \int_0^{t_A} F(4t-s)*(u_n\otimes u_n)(s)\biggr\|_3
&\le C\int_0^{t_A}\|F(4t-s)\|_{3/2}\|u_n(s)\|_3^2\dd s\\
&\le C(4t-t_A)^{-1} t_A \|u_n\|_X^2\le C\varepsilon^2 (4t-t_A)^{-1} t_A .
\end{split}
\]
Moreover,
\[
\begin{split}
\sqrt{4t}\biggl\| \int_0^{t_A} F(4t-s)*(u_n\otimes u_n)(s)\biggr\|_\infty
&\le C\sqrt t\int_0^{t_A}\|F(4t-s)\|_{3}\|u_n(s)\|_3^2\dd s\\
&\le C\sqrt t(4t-t_A)^{-3/2} t_A \|u_n\|_X^2\le C\varepsilon^2 \sqrt t(t-t_A)^{-3/2} t_A .
\end{split}
\]
Taking the the limit as $t\to+\infty$ in the above expressions we get
\begin{equation}
\label{lim01}
\lim_{t\to+\infty}  \biggl\|\int_0^{t_A} F(4t-s)*(u_n\otimes u_n)(s)\dd s\biggr\|_3
= \lim_{t\to+\infty}
  \sqrt{4t} \biggl\|\int_0^{t_A} F(4t-s)*(u_n\otimes u_n)(s)\dd s\biggr\|_\infty=0.
\end{equation}

In a similar way (using now $\|\theta_n(s)\|_1\le \varepsilon$ and, respectively, $\|u_n(s)\|_3\le \varepsilon$ or $\sqrt s\|u_n(s)\|_\infty\le\varepsilon$), we can prove that
\begin{equation}
\label{lim02}
\lim_{t\to+\infty}  \biggl\|\int_0^{t_A} (4t-s)F(4t-s)*(u_n\theta_n)(s)\dd s\biggr\|_3
=\lim_{t\to+\infty}
 \sqrt{4t} \biggl\|\int_0^{t_A} (4t-s)F(4t-s)*(u_n \theta_n)(s)\dd s\biggr\|_\infty=0.
\end{equation}
Furthermore,
\begin{eqnarray}\nonumber
\left\Vert \int_{0}^{t_{A}} \tilde{F}(4t-s)*(u_{n}\theta_{n})(s)\dd s \right\Vert_{1} &\leq&  \int_{0}^{t_{A}} \Vert \tilde{F}(4t-s)\Vert_{1}\Vert u_{n}(s)\Vert_{\infty}\Vert \theta_{n}(s)\Vert_{1} \dd s \\ \nonumber 
&\leq& C\varepsilon^{2} t_{A}^{3/4} \int_{0}^{4t} (4t-s)^{-1/2}s^{-3/4}\dd s \leq C\varepsilon^{2} t_{A}^{3/4} t^{-1/4}
\end{eqnarray}
Hence,
\begin{equation}
\label{lim03}
\lim_{t\to+\infty}  \biggl\|\int_0^{t_A} \tilde{F}(4t-s)*(u_{n}\theta_{n})(s) \dd s\biggr\|_1
=0.
\end{equation}

Applying Proposition~\ref{prop:yo-r} with $p=3$, $r=3/2$ and $\tilde r=1$,
we get from~\eqref{est-f}--\eqref{est-f-r} and our inductive assumption
\begin{equation}
\label{B1-r}
\begin{split}
&\biggl\|\int_{t_A}^{4t} F(4t-s)*(u_n\otimes u_n)(s)\biggr\|_{L^3(B_{A\sqrt{4t}}^c)} \\
&\qquad \le C\int_{t_A}^{4t} \Bigl[A^{-2}t^{-1}\|u_n(s)\|_3\|u_n(s)\|_3
 	+ (4t-s)^{-1/2}\|u_n(s)\|_{L^3(B_{\frac{A}{2}\sqrt{4t}}^c)}\|u_n(s)\|_\infty\Bigr]\dd s\\
&\qquad\le CA^{-2}\int_0^{4t} \Bigl[t^{-1}\varepsilon^2
		+ \kappa_n \varepsilon(4t-s)^{-1/2}s^{-1/2}\Bigr]\dd s\\
&\qquad\le CA^{-2}\varepsilon(\varepsilon+\kappa_n).
\end{split}
\end{equation}
In the same way, applying now Proposition~\ref{prop:yo-r} with $p=3$, $r=3$ and $\tilde r=3$,
\begin{equation}
\begin{split}
&\biggl\|\int_{t_A}^{4t} (4t-s)F(4t-s)*(u_n \theta_n)(s)\biggr\|_{L^3(B_{A\sqrt{4t}}^c)} \\
&\qquad \le C\int_{t_A}^{4t} \Bigl[A^{-2}t^{-3/2}(4t-s)\|u_n(s)\|_{3}\|\theta_n(s)\|_{3/2}
 	+ (4t-s)^{-1/2}\|u_n(s)\|_{L^3(B_{\frac{A}{2}\sqrt{4t}}^c)}
 	    \|\theta_n(s)\|_{3/2}
 	   \dd s\\
&\qquad\le C A^{-2}\int_1^{4t} \Bigl[t^{-3/2}(4t-s)s^{-1/2}\varepsilon^2
  +\kappa_n\varepsilon(4t-s)^{-1/2}s^{-1/2}
		\Bigr]\dd s\\
&\qquad\le CA^{-2}\varepsilon(\varepsilon+\kappa_n).
\end{split}
\end{equation}
Summarizing, recalling the relation between $u_{n+1}$ and $(u_n,\theta_n)$, we proved so far that
\begin{equation}
\label{sen+1}
\begin{split}
A^2\limsup_{t\to+\infty} \|u_{n+1}(t)\|_{L^3(B_{A\sqrt t}^c)} 
&=A^2\limsup_{t\to+\infty} \|u_{n+1}(4t)\|_{L^3(B_{A\sqrt{4t}}^c)}\\ 
&\le	C_0\Bigl(\|u_0\|_3+\|\theta_0\|_1\Bigr)+2C\varepsilon(\varepsilon+\kappa_n).
\end{split}
\end{equation}

On the other hand, applying once more Proposition~\ref{prop:yo-r} with $p=\infty$, $r=3$ and $\tilde r=6/5$, and our inductive assumption,
we get from~\eqref{est-f}--\eqref{est-f-r},
\begin{equation}
\label{B1-ri}
\begin{split}
&\sqrt{4t} \biggl\|\int_{t_A}^{4t} F(4t-s)*(u_n\otimes u_n)(s)\biggr\|_{L^\infty(B_{A\sqrt{4t}}^c)} \\
&\qquad \le C\sqrt t\int_{t_A}^{4t} \Bigl[A^{-3}t^{-3/2}\|u_n(s)\|_3\|u_n(s)\|_3
 	+ (4t-s)^{-3/4}\|u_n(s)\|_{L^\infty(B_{\frac{A}{2}\sqrt{4t}}^c)}\|u_n(s)\|_6\Bigr]\dd s\\
&\qquad\le CA^{-3}\sqrt t\int_1^{4t} \Bigl[t^{-3/2}s^{-1/2}\varepsilon^2
		+ \kappa_n \varepsilon(4t-s)^{-3/4}s^{-3/4}\Bigr]\dd s\\
&\qquad\le CA^{-3}\varepsilon(\varepsilon+\kappa_n).
\end{split}
\end{equation}
Moreover, applying now Proposition~\ref{prop:yo-r} with $p=\infty$, $r=\infty$ and $\tilde r=6$,
\begin{equation}
\begin{split}
&\sqrt{4t}\biggl\|\int_{t_A}^{4t} 
   (4t-s)F(4t-s)*(u_n \theta_n)(s)\biggr\|_{L^\infty(B_{A\sqrt{4t}}^c)} \\
&\qquad \le C\sqrt t\int_{t_A}^{4t} \Bigl[A^{-4}t^{-2}(4t-s)\|u_n(s)\|_{\infty}\|\theta_n(s)\|_{1}
 	+ (4t-s)^{-3/4}\|u_n(s)\|_{L^\infty(B_{\frac{A}{2}\sqrt{4t}}^c)}  \|\theta_n(s)\|_{6/5}
 	   \dd s\\
&\qquad\le C A^{-3}\sqrt t\int_1^{4t} \Bigl[A^{-1}t^{-2}(4t-s)s^{-1/2}\varepsilon^2
 		+\kappa_n\varepsilon(4t-s)^{-3/4}s^{-3/4}
		\Bigr]\dd s\\
&\qquad\le CA^{-3}\varepsilon(\varepsilon+\kappa_n).
\end{split}
\end{equation}
These estimates imply
\begin{equation}
\label{sen+2}
\begin{split}
A^3\limsup_{t\to+\infty} \sqrt t\|u_{n+1}(t)\|_{L^\infty(B_{A\sqrt t}^c)} 
&=A^3\limsup_{t\to+\infty} \sqrt {4t}\|u_{n+1}(4t)\|_{L^\infty(B_{A\sqrt{4t}}^c)}\\
&\le	C_0\Bigl(\|u_0\|_3+\|\theta_0\|_1\Bigr)+2C\varepsilon(\varepsilon+\kappa_n).
\end{split}
\end{equation}

 Moreover, applying Proposition~\ref{prop:yo-r} with $p=1$, $r=1$ and $\tilde r=1$,
we get from~\eqref{est-ff}--\eqref{est-ff-r} and our inductive assumption
\begin{equation}
\begin{split}
&\biggl\|\int_{t_A}^{4t} 
   \tilde{F}(4t-s)*(u_n \theta_n)(s)\biggr\|_{L^1(B_{A\sqrt{4t}}^c)} \\
&\qquad \le C\int_{t_A}^{4t} \Bigl[A^{-1}t^{-1/2}\|u_n(s)\|_{\infty}\|\theta_n(s)\|_{1}
 	+ (4t-s)^{-1/2}\|\theta_n(s)\|_{L^1(B_{\frac{A}{2}\sqrt{4t}}^c)}  \|u_n(s)\|_{\infty}
 	   \dd s\\
&\qquad\le C A^{-1}\int_0^{4t} \Bigl[t^{-1/2}s^{-1/2}\varepsilon^2
 		+\kappa_n\varepsilon(4t-s)^{-1/2}s^{-1/2}
		\Bigr]\dd s\\
&\qquad\le CA^{-1}\varepsilon(\varepsilon+\kappa_n).
\end{split}
\end{equation}
Hence, we get
\begin{equation}
\label{sen+2-ff}
\begin{split}
A\limsup_{t\to+\infty} \|\theta_{n+1}(t)\|_{L^1(B_{A\sqrt t}^c)} 
&=A\limsup_{t\to+\infty} \|\theta_{n+1}(4t)\|_{L^1(B_{A\sqrt{4t}}^c)}\\
&\le	C_0\Bigl(\|u_0\|_3+\|\theta_0\|_1\Bigr)+2C\varepsilon(\varepsilon+\kappa_n).
\end{split}
\end{equation}
Combining~\eqref{sen+1}--\eqref{sen+2-ff}, 
we deduce that~\eqref{se1}--\eqref{se2} hold true at the step $n+1$ with a constant 
\[
\kappa_{n+1}\le C_0 \Bigl(\|u_0\|_3+\|\theta_0\|_1\Bigr)+2C\varepsilon^2+2C\varepsilon \kappa_n,
\]
where $C>0$ is an absolute constant.
Our smallness assumption on the initial data ensures $2C\varepsilon<1$,
so that one gets $\kappa\equiv\sup_{n\ge1} \kappa_n<\infty$.
We thus deduce from~\eqref{se0}--\eqref{se2} that
\begin{equation}
\label{suu1}
\begin{split}
&\limsup_{t\to+\infty} \|\theta(t)\|_{L^1(B_{A\sqrt t}^c)} \le \kappa A^{-1} \\
&\limsup_{t\to+\infty} \|u(t)\|_{L^3(B_{A\sqrt t}^c)} \le \kappa A^{-2}\qquad\text{and} \\
 &\limsup_{t\to+\infty}  \sqrt t \|u(t)\|_{L^\infty(B_{A\sqrt t}^c)} \le \kappa A^{-3}.
\end{split}
\end{equation}
\end{proof}

\section{Large time growth of $L^p$-norm, $1<p<3$}
\label{sec:growth}

\subsection{Statement of the main result}
The goal of this section is to establish lower bound estimates for $\|u(t)\|_{p}$ that precisely agree with
the corresponding upper bounds obtained in Theorem~\ref{theosmall} (see~\eqref{ubep}).
Namely, we establish the following.

\begin{theorem}
\label{th:lube}
Let $1<p \leq\infty$. Let $(u,\theta)$ be the solution constructed in Theorem~\ref{theosmall}.
We put the additional conditions on the data $\int|x|^q\min\{1,|\theta_0(x)|^q\}\dd x<\infty$ 
for some $1\le q<3/2$ such that $q\le p$.
When $1<p<3$, we also assume $u_0\in L^p(\R^3)$.
Then there exist two constants $c_1,c_2>0$ and $t_0>0$ such that for all $t\ge t_0$,
\begin{equation}
 \label{lube}
 c_1\bigl|\textstyle\int\theta_0\bigr|\,t^{-\frac12(1-3/p)}\le \|u(t)\|_p\le c_2\,t^{-\frac12(1-3/p)}.
\end{equation}
In particular, for $3< p\le \infty$, $\|u(t)\|_p\to0$. If otherwise $1<p<3$ and $\int\theta_0\not=0$, 
then $\|u(t)\|_p\to+\infty$.
\end{theorem}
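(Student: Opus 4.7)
The upper bound in~\eqref{lube} is immediate from Theorem~\ref{theosmall}: for $3\le p\le\infty$ it holds with no additional assumption on the data, and for $1<p<3$ it follows from~\eqref{ubep} under the assumed hypothesis $u_0\in L^p$. The heart of the matter is the lower bound, for which I follow the strategy announced in the introduction: restrict to the outer region $B^c_{A\sqrt t}$ with $A$ large, and exploit the fact that there the buoyancy-driven linear term $t\,e^{t\Delta}\P\theta_0 e_3$ dominates every other contribution to $u(t)$.

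\textbf{Leading profile via a Duoandikoetxea--Zuazua splitting.} Writing, via~\eqref{IEE},
\[
u(t) \;=\; e^{t\Delta}u_0 \;+\; t\,e^{t\Delta}\P\theta_0 e_3 \;-\; B_1(u,u) \;-\; B_2(u,\theta),
\]
I plan to split the buoyancy term as
\[
t\,e^{t\Delta}\P(\theta_0 e_3)(x) \;=\; \Bigl(\textstyle\int\theta_0\Bigr)\,t\,K(x,t)\,e_3 \;+\; t\!\int\bigl[K(x-y,t)-K(x,t)\bigr]\theta_0(y)\,e_3\dd y,
\]
where $K$ is the kernel of $e^{t\Delta}\P$ from~\eqref{kerK}--\eqref{poifk}. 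Its scaling $K(\cdot,t)=t^{-3/2}K(\cdot/\sqrt t,1)$ gives the exact outer size of the leading piece:
\[
\bigl\|t\,K(\cdot,t)\,e_3\bigr\|_{L^p(B^c_{A\sqrt t})} \;=\; t^{-\frac12(1-3/p)}\,\gamma_A,\qquad \gamma_A:=\|K(\cdot,1)e_3\|_{L^p(B^c_A)}>0,
\]
with $\gamma_A>0$ thanks to the explicit power-law tails of $K(\cdot,1)e_3$.

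\textbf{Controlling the remaining pieces on $B^c_{A\sqrt t}$.} Each of the four other contributions must be bounded in $L^p(B^c_{A\sqrt t})$ by a small factor times $t^{-\frac12(1-3/p)}$. (a) The term $e^{t\Delta}u_0$ is handled by the dominated-convergence argument of Lemma~\ref{lem:linkato-r}, extended from $p\in\{3,\infty\}$ to the given $p$ (using $u_0\in L^p$ when $p<3$). (b) The Duoandikoetxea--Zuazua remainder is dispatched by splitting the $y$-integral at $|y|=|x|/2$: on $\{|y|\le|x|/2\}$ a mean-value bound on $K(x-y,t)-K(x,t)$ absorbs a weight $|y|^q$ through the hypothesis $\int|x|^q\min\{1,|\theta_0|^q\}\dd x<\infty$, while on $\{|y|>|x|/2\ge\tfrac12 A\sqrt t\}$ the tail $\int_{|y|\ge A\sqrt t/2}|\theta_0|\dd y\to 0$ by the same hypothesis; the restrictions $q<3/2$ and $q\le p$ are exactly what make the two halves of the estimate fit together. (c) The bilinear terms $B_1(u,u)$ and $B_2(u,\theta)$ are estimated by rerunning the proof of Proposition~\ref{pr:kato-r} at exponent~$p$: for $p\in[3,\infty]$ one combines Proposition~\ref{prop:yo-r} with the endpoint tail bounds~\eqref{limsup0} (interpolated in $L^p$); for $p\in(1,3)$ the same Young-with-tail inequality is used but now with the global $L^p$ bound~\eqref{ubep} fed in as a priori control, which is why the hypotheses include $u_0\in L^p$ in that range.

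\textbf{Conclusion and main obstacle.} Combining the above by the triangle inequality,
\[
\|u(t)\|_p \;\ge\; \|u(t)\|_{L^p(B^c_{A\sqrt t})} \;\ge\; \Bigl(\gamma_A\,\bigl|\textstyle\int\theta_0\bigr| - \eta(A,t)\Bigr)\,t^{-\frac12(1-3/p)},
\]
where $\eta(A,t)$ collects the nonlinear contribution (of order $\varepsilon^2 A^{-3(1-1/p)}$, dominated by $\gamma_A|\!\int\!\theta_0|$ under the smallness assumption on the data) together with a $t\to\infty$ remainder from (a) and (b). Picking $A$ large and then $t_0$ large yields~\eqref{lube} with a positive $c_1$. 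The principal obstacle I expect is the regime $1<p<3$: the outer tail bounds of Proposition~\ref{pr:kato-r} are available only at exponents $3$ and $\infty$, so for $p<3$ one cannot interpolate directly and must marry them with the global control~\eqref{ubep} and the weighted moment condition with $q\le p$—which is precisely the reason behind the corresponding hypotheses. A secondary technical subtlety is the careful bookkeeping required to ensure that the nonlinear coefficient in step~(c) is dominated by $\gamma_A|\!\int\!\theta_0|$, relying on the smallness of $\|u_0\|_3+\|\theta_0\|_1$ from Theorem~\ref{theosmall}.
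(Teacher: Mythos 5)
Your overall strategy is the paper's: restrict to the outer region $B^c_{A\sqrt t}$, isolate the piece $(\int\theta_0)\,t\,K(\cdot,t)$ of the buoyancy term, and show every other contribution is smaller there. However, your final bookkeeping contains a genuine gap at the decisive step. You record the nonlinear contribution as being of order $\varepsilon^2A^{-3(1-1/p)}=\varepsilon^2A^{-3+3/p}$, i.e.\ the \emph{same} power of $A$ as the leading coefficient $\gamma_A\simeq A^{-3+3/p}$, and you then claim it is dominated by $\gamma_A\,|\int\theta_0|$ ``under the smallness assumption on the data''. That cannot work: the smallness hypothesis is on $\|u_0\|_3+\|\theta_0\|_1$, while $|\int\theta_0|$ may be arbitrarily small (even zero) compared with $\varepsilon^2$, so no smallness of $\varepsilon$ makes $\varepsilon^2A^{-3+3/p}\le\tfrac12\gamma_A|\int\theta_0|$, and taking $A$ large does not help when the two terms carry the same power of $A$. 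The whole point of the paper's Lemma~\ref{lem:b12uu} is that the outer-region bilinear estimates gain a full extra factor $A^{-1}$: $\limsup_t t^{\frac12(1-3/p)}\|B_1+B_2\|_{L^p(B^c_{A\sqrt t})}\le CA^{-4+3/p}$, so that for any fixed datum with $\int\theta_0\neq0$ one can choose $A$ large (depending on $|\int\theta_0|$) and win. Your step (c) gestures at rerunning Proposition~\ref{pr:kato-r} at exponent $p$, which indeed yields $A^{-4+3/p}$ if carried out, but as written your conclusion relies on the wrong exponent and the wrong mechanism. Relatedly, asserting $\gamma_A>0$ is not enough: you need a \emph{lower} bound $\gamma_A\ge cA^{-3+3/p}$ (the pointwise bound \eqref{poifk} only gives an upper bound), which the paper extracts in Lemma~\ref{lem:mainte} from the exactly homogeneous degree $-3$ part of $K$ plus an exponentially small remainder; without such a lower bound you cannot beat even a correct $CA^{-4+3/p}$ nonlinear estimate by enlarging~$A$.

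A secondary issue concerns your treatment (b) of the remainder $t\int[K(x-y,t)-K(x,t)]\theta_0(y)\dd y$. The hypothesis $\int|x|^q\min\{1,|\theta_0|^q\}\dd x<\infty$ does \emph{not} control the weighted norm $\||x|\theta_0\|_{L^q}$ (on the set where $|\theta_0|>1$ the truncation loses all information), so a mean-value bound cannot ``absorb the weight'' directly from the stated hypothesis. The paper first performs a time-translation reduction: since $\|\theta(t)\|_\infty\to0$ and the mean of $\theta$ is conserved, one may restart from $(u(t_q),\theta(t_q))$ and work with the honest condition $\int|x|^q|\theta_0|^q\dd x<\infty$, after which the Duoandikoetxea--Zuazua decomposition (Proposition~\ref{pro:dirac}, Lemmas~\ref{lem:declin}--\ref{lem:fv-r}) gives the needed $o(t^{-\frac12(1-3/p)})$ bound; your mean-value/tail splitting can replace the DZ step after that reduction, but the reduction itself is missing from your plan. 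The treatment of $e^{t\Delta}u_0$ and the upper bound are fine (the paper's Lemma~\ref{lem:hu} even avoids the outer-region refinement you propose).
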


The subtle point in establishing the lower bounds is that all the terms contributing to $u(t)$, \emph{i.e.},
the three terms on the right hand side of the equality,
\begin{equation}
 \label{eq:u=}
 u(t)=e^{t\Delta}[u_0+t\P\theta_0e_3]-B_1(u,u)-B_2(u,\theta),
\end{equation}
individually have the same behavior $\simeq t^{-\frac12(1-3/p)}$ as~$t\to+\infty$ in the $L^p$-norm.
The strategy will consist in proving that these three terms cannot compensate each other. To achieve this, we will compute their $L^p$-norm in regions $\{x\colon |x|\ge A\sqrt t\}$, finding a behavior of the form 
$C_j(A)\,t^{-\frac12(1-3/p)}$
as $t\to+\infty$ for each of them ($j=1,2,3$). But for large enough $A$, the constant $C_1(A)$
corresponding to the first term $e^{t\Delta}[u_0+t\P\theta_0e_3]$ turns
out to be much larger than the corresponding constants $C_2(A)$ and $C_3(A)$
of $B_1(u,u)$ and $B_2(u,\theta)$ respectively.
This implies that $u(t)$ can be bounded from below in the 
$L^p$-norm on $\{x\colon |x|\ge A\sqrt t\}$, and so in the $L^p(\R^n)$ norm as announced by
\eqref{lube}.

The result of Theorem~\ref{th:lube} is in agreement with that of~\cite{BraS12}, where
estimates~ of the form~\eqref{lube} were obtained under much more stringent conditions on the
data (including pointwise decay assumptions on $u_0$, $\theta_0$ and somewhat artificial
smallness smallness conditions on $\theta_0$ and $u_0$, etc.).

The condition
$$\int|x|^q\min\{1,|\theta_0(x)|^q\}\dd x<\infty$$ in Theorem \ref{th:lube} could be seen as a weaker integral version of the pointwise estimates in \cite{BraS12} on data. We do not know if we can remove it.

\subsection{$L^p$-analysis of the bilinear terms in $B_{A\sqrt t}^c$\,}
 
\begin{lemma}
\label{lem:b12uu}
Let $1<p\le \infty$.
Let $(u,\theta)$ be the solution constructed in Proposition~\ref{pr:kato-r}
 and the second part of Theorem~\ref{theosmall}, under the usual smallness assumptions on $\|u_0\|_3$ and
 $\|\theta_0\|_1$. If $1<p<3$, we require the additional condition $u_0\in L^p(\R^n)$.
 There is $A_0\ge1$ and $C>0$ such that, for all $A\ge A_0$,
\begin{equation}
 \label{b2uu}
 \limsup_{t\to+\infty} t^{\frac12(1-\frac3p)} \Bigl( \|B_1(u,u)(t)\|_{L^p(B_{A\sqrt{t}}^c)}
   +\|B_2(u,\theta)(t)\|_{L^p(B_{A\sqrt{t}}^c)}\Bigr)
   \le CA^{-4+3/p}.
\end{equation}
\end{lemma}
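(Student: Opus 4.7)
The plan is to imitate the inductive scheme of Proposition~\ref{pr:kato-r}, now carried out at the $L^p$ level. I would first evaluate the bilinear operators at time $4t$ and split each time integral as $\int_0^{4t}=\int_0^{t_A}+\int_{t_A}^{4t}$, with $t_A\ge 1$ chosen (depending on $A$) so that the limsup bounds of Proposition~\ref{pr:kato-r} hold with a factor of $2$ for all $s\ge t_A$. As in \eqref{lim01}--\eqref{lim03}, the short-time pieces are handled by the classical Young inequality together with the $X$- and $Y$-norm bounds $\|u\|_X,\|\theta\|_Y\le C\varepsilon$; they decay strictly faster than $t^{-\frac12(1-3/p)}$ in the full $L^p(\R^3)$-norm, hence contribute nothing to the $\limsup$.

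For the long-time piece I would invoke Proposition~\ref{prop:yo-r} with $R=A\sqrt{4t}$, and use the scaling-plus-pointwise kernel bound
\[
\|F(\cdot,4t-s)\|_{L^r(B^c_{A\sqrt t})}\le CA^{-4+3/r}\,t^{-2+3/(2r)},\qquad r\ge 1,
\]
(and its analogue for $(4t-s)F$, relevant to $B_2$). The key choice is $r=p$ (so $q=1$), producing the target prefactor $A^{-4+3/p}$. The accompanying $L^1$-norms are controlled by H\"older: for $B_2$, $\|u\theta\|_1\le\|u\|_\infty\|\theta\|_1\le C\varepsilon^2 s^{-1/2}$ uniformly in $p$; for $B_1$ in the range $1<p<3$ where $u_0\in L^p$, the bound $\|u\otimes u\|_1\le\|u\|_p\|u\|_{p'}\le Cs^{1/2}$ follows from Theorem~\ref{theosmall} Part~2 and interpolation between $L^p$ and $L^3$ (or $L^\infty$) to reach $L^{p'}$. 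In both cases the $s$-integral is $O(t^{3/2})$, and a beta-function identity combines it with $t^{-2+3/(2p)}$ to produce the correct scaling $t^{-\frac12(1-3/p)}$. The companion ``outside-product'' half of Proposition~\ref{prop:yo-r} I would treat symmetrically, using $\sqrt s\|u(s)\|_{L^\infty(B^c_{A\sqrt s})}\le 2\kappa A^{-3}$ and $\|\theta(s)\|_{L^1(B^c_{A\sqrt s})}\le 2\kappa A^{-1}$, interpolated against the global norms of the remaining factor.

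The main obstacle I foresee is the case $p\ge 3$ of $B_1(u,u)$, where $u_0\in L^p$ is not assumed and the $\|u\otimes u\|_1$ control above fails. There I would abandon the clean Proposition~\ref{prop:yo-r} template on this term and instead split $B_1(u,u)=I_1+I_2$ pointwise along the regions $|y|\le|x|/2$ versus $|y|>|x|/2$. On the first region the uniform kernel bound $|F(x-y,\tau)|\le C|x|^{-4}$ yields
\[
\|I_1\|_{L^p(B^c_{A\sqrt t})}\le C\bigl\|\,|x|^{-4}\bigr\|_{L^p(B^c_{A\sqrt t})}\int_0^{4t}\|u(s)\|_2^2\,ds=C(A\sqrt t)^{-4+3/p}\cdot O(t^{3/2}),
\]
the time integral being controlled by the growth $\|u(s)\|_2\le Cs^{1/4}$ that holds under the ambient hypotheses of Theorem~\ref{th:lube}. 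On the outer region the integrand is supported in $B^c_{A\sqrt t/2}$, and Proposition~\ref{prop:yo-r} applies with H\"older exponents in $[3,\infty]$ that avoid $L^2$ altogether, the outside-ball bounds providing the remaining $A$-decay sufficient to absorb this contribution into the target $O(A^{-4+3/p})$.
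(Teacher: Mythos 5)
Your overall skeleton (evaluating at $4t$, splitting at $t_A$, discarding the $\int_0^{t_A}$ piece, applying Proposition~\ref{prop:yo-r} with the kernel measured outside the half-ball so that $\|F(4t-s)\|_{L^r(B^c_{A\sqrt t})}\le CA^{-4+3/r}t^{-2+3/(2r)}$ uniformly in $s$, and feeding the outside-ball bounds of Proposition~\ref{pr:kato-r} into the companion term) is exactly the paper's, and your treatment of $B_2$ and of $B_1$ for $1<p\le 2$ coincides with it. Note, however, that already for $2<p<3$ your justification of $\|u\otimes u\|_1\le\|u\|_p\|u\|_{p'}\le Cs^{1/2}$ by ``interpolation between $L^p$ and $L^3$ (or $L^\infty$)'' does not work: there $p'<p$, so $L^{p'}$ is not reachable by interpolation from above; this step is only genuinely covered by your argument when $p\le 2$ (where $\|u\|_2^2$ is available).

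The real gap is your substitute argument for $B_1$ when $p\ge 3$. Your inner-region estimate for $I_1$ hinges on $\int_0^{4t}\|u(s)\|_2^2\,\dd s$, which you control by invoking ``$\|u(s)\|_2\le Cs^{1/4}$ under the ambient hypotheses of Theorem~\ref{th:lube}''. That bound is not available: for $3\le p\le\infty$ neither Lemma~\ref{lem:b12uu} nor Theorem~\ref{th:lube} assumes $u_0\in L^2$ (the data are only $u_0\in L^3$ small, $\theta_0\in L^1$ small plus a moment-type condition on $\theta_0$), and $\|u(s)\|_2\le C(1+s)^{1/4}$ is precisely what Theorem~\ref{theosmall}(2) delivers only under the extra hypothesis $u_0\in L^2$. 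With $u_0\in L^3$ alone, $u(s)$ need not lie in $L^2$ at all, so $\|u\otimes u\|_1$ may be infinite and your $I_1$ bound collapses; the repair you propose for the obstacle you (rightly) sensed thus assumes the very integrability it was designed to avoid. For comparison, the paper does not change method at $p\ge 3$: it keeps the Proposition~\ref{prop:yo-r} template with $r=p$, $q=1$ and bounds $\|u\otimes u\|_1\le\|u(s)\|_p\|u(s)\|_{p'}\le Cs^{1/2}$, i.e.\ it too uses integrability of $u$ below $L^3$ at this point; so your instinct that this is the delicate step is sound, but to complete your route you would need either to import such an $L^{p'}$ (or $L^2$) decay bound as an additional hypothesis, or to find an argument for the inner term that avoids any Lebesgue norm of $u$ below $L^3$ — and a simple change of exponents in Proposition~\ref{prop:yo-r} will not do, since with only $L^3$--$L^\infty$ information that term cannot be pushed below the borderline power $A^{-3+3/p}$ needed in the proof of Theorem~\ref{th:lube}.
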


\begin{proof}
Let $t_A\ge1$ such that
\begin{equation}
\|u(t)\|_{L^3(B_{A\sqrt t}^c)} \le 2\kappa A^{-2} 
\quad\text{and}\quad
\sqrt t\| u(t)\|_{L^\infty(B_{A\sqrt t}^c)} \le 2\kappa A^{-3}, 
\qquad\text{for all $t\ge t_A$}.
\end{equation}
{By interpolation, we have for all $t\geq t_{A}$, 
$$
\forall r \geq 3,\quad \quad \Vert u(t)\Vert_{L^{r}(B_{A\sqrt{t}}^c)} \leq 2\kappa A^{-3+\frac{3}{r}}.
$$
}
We have, by the application of Proposition~\ref{prop:yo-r}, as $t\to+\infty$
(we assume $3\leq p<\infty$ in the calculations below; if $p=\infty$ the result remains true
with slight changes in the choice of the exponents in Proposition~\ref{prop:yo-r}):
\begin{equation}
\begin{split}
 &(4t)^{\frac12(1-\frac3p)}\|B_1(u,u)(4t)\|_{L^p(B_{A\sqrt{4t}}^c)}\\
  &\quad\le  Ct^{\frac12(1-\frac3p)}
   \int_{t_A}^{4t} A^{-4+\frac3p}t^{-2+\frac{3}{2p}}\|u(s)\|_p\|u(s)\|_{p'}
 	+(4t-s)^{-\frac12}\|u(s)\|_{L^{p}(B_{\frac{A}{2}\sqrt{4t}}^c)}\|u(s)\|_{{L^\infty(B_{\frac{A}{2}\sqrt{4t}}^c)}}\dd s + o(1)\\
 &\quad\le  Ct^{\frac12(1-\frac3p)}\int_0^{4t} A^{-4+3/p}t^{-2+3/(2p)} s^{1/2}
 	+(4t-s)^{-1/2} A^{-6+3/p} s^{-1+3/(2p)}\dd s + o(1)\\
 &\quad\le CA^{-4+3/p} + o(1).
\end{split}
\end{equation}
The same holds for $1<p<3$,
\begin{equation}
\begin{split}
 &(4t)^{\frac12(1-\frac3p)}\|B_1(u,u)(4t)\|_{L^p(B_{A\sqrt{4t}}^c)}\\
  &\quad\le  Ct^{\frac12(1-\frac3p)}
   \int_{t_A}^{4t} A^{-4+\frac3p}t^{-2+\frac{3}{2p}}\|u(s)\|_p\|u(s)\|_{p'}
 	+(4t-s)^{-\frac12}\|u(s)\|_p\|u(s)\|_{{L^\infty(B_{\frac{A}{2}\sqrt{4t}}^c)}}\dd s + o(1)\\
 &\quad\le  Ct^{\frac12(1-\frac3p)}\int_0^{4t} A^{-4+3/p}t^{-2+3/(2p)} s^{1/2}
 	+(4t-s)^{-1/2} A^{-3} s^{-1+3/(2p)}\dd s + o(1)\\
 &\quad\le CA^{-4+3/p} + o(1).
\end{split}
\end{equation}
Here, and below, the $o(1)$ arise from the contribution of the integral $\int_0^{t_A}$.

Similarly, as $t\to+\infty$,
\begin{equation}
\begin{split}
 &(4t)^{\frac12(1-\frac3p)}\|B_2(u,\theta)(4t)\|_{L^p(B_{A\sqrt{4t}}^c)}\\
  &\quad\le  Ct^{\frac12(1-\frac3p)}
   \int_{t_A}^{4t}\biggl[ A^{-4+\frac3p}t^{-2+\frac{3}{2p}}(4t-s)\|u(s)\|_\infty\|\theta(s)\|_1\\
 &\qquad\qquad\qquad\qquad\qquad+(4t-s)^{-1+\frac{3}{2p}}\|u(s)\|_{L^\infty(B_{\frac{A}2\sqrt{4t}}^c)}\|\theta(s)\|_{L^1(B_{\frac{A}2\sqrt{4t}}^c)}\biggr]
 \dd s + o(1)\\
 &\quad\le  Ct^{\frac12(1-\frac3p)}\int_0^{4t} A^{-4+3/p}t^{-2+3/(2p)} (4t-s)s^{-1/2}
 	+(4t-s)^{-1+\frac{3}{2p}} A^{-4} s^{-1/2}\dd s + o(1)\\
 &\quad\le CA^{-4+3/p} + o(1).
\end{split}
\end{equation}
Summing~ the two last estimates and taking the $\limsup_{t\to+\infty}$ leads to~\eqref{b2uu}.

\end{proof}

\subsection{$L^p$-analysis of the linear term in $B_{A\sqrt t}^c$\,}

\begin{proposition}
\label{pro:dirac}
Let $f\in L^1(\R^n)$. Then the function
 $V(x)=-x\int_0^1 f\bigl(\frac x\lambda\bigr)\frac{\dd \lambda}{\lambda^{n+1}}$ 
belongs to $L^1_{\rm loc}(\R^n)$ and the following identity holds in the distributional sense:
\begin{equation}
\label{diracdec}
f=\Bigl(\int f\Bigr)\delta+\div V,
\end{equation}
where $\delta$ denotes the Dirac mass at zero. 
\end{proposition}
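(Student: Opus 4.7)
My plan is to treat this in two stages: first the local integrability of $V$, and then the distributional identity by testing against a smooth compactly supported $\varphi$.

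For the local integrability, I would fix a radius $R>0$ and bound $\int_{B_R}|V(x)|\dd x$ by $\int_{B_R}|x|\int_0^1|f(x/\lambda)|\lambda^{-n-1}\dd\lambda\,\dd x$. The substitution $y=x/\lambda$ turns the spatial integral into $\int_{B_{R/\lambda}}\lambda|y||f(y)|\lambda^n\dd y$, so that (after cancelling the $\lambda^{-n-1}$ weight) one gets $\int_0^1\int_{B_{R/\lambda}}|y||f(y)|\dd y\,\dd\lambda$. Exchanging the order of integration, the inner condition $\lambda\le R/|y|$ produces a factor $\min(1,R/|y|)$ in front of $|y||f(y)|$, and then $|y|\min(1,R/|y|)\le R$ gives the bound
\begin{equation*}
\int_{B_R}|V(x)|\dd x\le R\|f\|_1<\infty,
\end{equation*}
so $V\in L^1_{\rm loc}$.

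For the distributional identity, I would take $\varphi\in C_c^\infty(\R^n)$ and reduce the claim $\int f\varphi=(\int f)\varphi(0)-\int V\cdot\nabla\varphi$ to the identity $\int f(x)(\varphi(x)-\varphi(0))\dd x=-\int V\cdot\nabla\varphi$. The key trick is the fundamental theorem of calculus along the segment $[0,x]$:
\begin{equation*}
\varphi(x)-\varphi(0)=\int_0^1 x\cdot\nabla\varphi(tx)\dd t.
\end{equation*}
Plugging this in and changing variables $y=tx$ (so $\dd x=t^{-n}\dd y$) in the inner integral converts $\int f(x)\,x\cdot\nabla\varphi(tx)\dd x$ into $t^{-n-1}\int f(y/t)\,y\cdot\nabla\varphi(y)\dd y$. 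Exchanging the $t$ and $y$ integrations then recognizes exactly $-V(y)\cdot\nabla\varphi(y)$ inside the integral, which is what we want.

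The only nontrivial point is the Fubini step, and it is the same obstacle as in the $L^1_{\rm loc}$ estimate: the triple integral $\int_0^1\int|f(x)||x||\nabla\varphi(tx)|\dd x\,\dd t$ (after the change of variables) must be finite. Since $\nabla\varphi$ is supported in some ball $B_{R_\varphi}$, the integrand vanishes unless $|x|\le R_\varphi/t$, and the same elementary bound as before gives $\int|f(x)||x|\min(1,R_\varphi/|x|)\dd x\le R_\varphi\|f\|_1$. This justifies all exchanges, and the identity $f=(\int f)\delta+\div V$ follows. I expect no further difficulty: the whole argument rests on the one‑parameter Taylor expansion of $\varphi$ at the origin together with the substitution $y=tx$, with Fubini controlled by the factor $|x|\min(1,R/|x|)$.
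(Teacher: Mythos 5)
Your proof is correct and follows essentially the same route as the paper: the scaling substitution $y=x/\lambda$ with the bound $|y|\min(1,R/|y|)\le R$ for local integrability, and the Taylor/fundamental-theorem identity $\varphi(x)-\varphi(0)=\int_0^1 x\cdot\nabla\varphi(tx)\,\dd t$ followed by the same change of variables for the distributional identity. Your explicit justification of the Fubini exchange (which the paper leaves implicit) is a welcome but inessential addition.
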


\begin{remark}
Proposition~\ref{pro:dirac} should be compared with a result of J.~Duoandikoetxea and E.~Zuazua~\cite{DuoZ92}, 
where such decomposition of~$f$
was established assuming $f\in L^1(\R^n)$ and $|x|f\in L^q(\R^n)$ for some 
$1\le q<n/(n-1)$. In this case, the authors observed the validity of the estimate
\begin{equation}
\label{zua}
 \|V\|_q\le C\|\,|x|f\|_q, \qquad 1\le q<n/(n-1),
\end{equation}
where $C$ is a constant depending only on $q$ and $n$.
\end{remark}
Estimate~\eqref{zua} readily follows applying Minkowski integral inequality to the $L^q$-norm
in the definition of~$V$. The restriction $1\le q<n/(n-1)$ is needed for the convergence of
the $L^q$-Bochner integral.

\begin{proof}
The fact that $V\in L^1_{\rm loc}(\R^n)$ follows from an elementary calculation: for $R\ge0$,
\[
\begin{split}
 \int_{|x|\le  R}|V(x)|\dd x
 &\le \int_0^1\!\!\int_{|x|\le R/\lambda}|x| \,|f(x)|\dd x\dd \lambda\\
&=\int_{|x|\le R}\int_0^1 |x| \,|f(x)|\dd \lambda \dd x
  +\int_{|x|\ge R}\int_0^{R/|x|} |x| \,|f(x)|\dd \lambda\dd x\\
 &\le R\int|f(x)|\dd x.  
\end{split}
\]
Moreover, for any test function $\varphi\in \mathcal{D}(\R^n)$,
\[
\begin{split}
 \langle f-\Bigl(\int f\Bigr)\delta,\varphi\rangle
 &=\int f(x)[\varphi(x)-\varphi(0)]\dd x
   =\int f(x)\int_0^1 x\cdot\nabla\varphi(\lambda x)\dd \lambda\dd x\\
 &=\int\!\!\!\int_0^1 f(x/\lambda)\frac{\dd \lambda}{\lambda^{n+1}} x\cdot\nabla \varphi(x)\dd x =
   \langle \div V,\varphi\rangle.   
\end{split}
\]
\end{proof}

The decomposition~\eqref{diracdec} can be applied to $\theta_0$, and allows to obtain the following:

\begin{lemma}
\label{lem:declin}
If $\theta_0\in L^1(\R^3)$, if $V$ is associated to~$\theta_0$ 
as in~Proposition~\ref{pro:dirac} and $K(\cdot,t)$ and $F(\cdot,t)$ denote respectively the kernels of 
$e^{t\Delta}\P$ and $e^{t\Delta}\P\div$, then for all $t>0$
 the convolution integral $F(\cdot,t)*V(x)=\int F(x-y,t)V(y)\dd y$
is well defined for a.e. $x\in \R^n$ and defines a locally integrable function.
Moreover,
for $j=1,2,3$,
\begin{equation}
\label{declin}
 \bigl[e^{t\Delta}\P\theta_0e_3\bigr]_{j}=\Bigl(\int\theta_0\Bigr)K_{j,3}(\cdot,t)+\sum_{h=1}^3F_{j,h,3}(\cdot,t)*V_h.
\end{equation}
\end{lemma}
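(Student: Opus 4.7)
The plan is to apply Proposition~\ref{pro:dirac} to the scalar $\theta_0$, obtaining the distributional identity $\theta_0=\bigl(\int\theta_0\bigr)\delta+\nabla\cdot V$ with $V\in L^1_{\rm loc}(\R^3)$, and then to convolve with the scalar kernel $K_{j,3}(\cdot,t)$ and transfer each derivative from $V$ onto the kernel, producing $F_{j,h,3}(\cdot,t):=\partial_h K_{j,3}(\cdot,t)$. The $\delta$ piece immediately yields $\bigl(\int\theta_0\bigr)K_{j,3}(\cdot,t)$, while the $\nabla\cdot V$ piece produces the sum in~\eqref{declin}. The delicate point is that $V$ has only weak integrability, so the a.e.\ existence and local integrability of each $F_{j,h,3}(\cdot,t)*V_h$ must be established first.

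For that step I would fix $R>0$ and apply Fubini to $\int_{|x|\le R}\!\int|F(x-y,t)|\,|V(y)|\dd y\dd x$, splitting the inner integral at $|y|=2R$. For $|y|\le 2R$ one uses $\int_{|x|\le R}|F(x-y,t)|\dd x\le\|F(\cdot,t)\|_1$ together with the local bound $\int_{|y|\le 2R}|V|\dd y\le 2R\|\theta_0\|_1$ established in the proof of Proposition~\ref{pro:dirac}. For $|y|\ge 2R$ the scaling~\eqref{kerF} and pointwise decay~\eqref{poifk} yield $|F(x-y,t)|\le C|x-y|^{-4}$ uniformly in $t$, hence $\int_{|x|\le R}|F(x-y,t)|\dd x\le CR^3|y|^{-4}$, reducing matters to controlling $\int_{|y|\ge 2R}|V(y)||y|^{-4}\dd y$. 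The substitution $s=|y|/\lambda$ in the definition of $V$ yields the polar representation
\[
V(y)=-\frac{y}{|y|^3}\int_{|y|}^\infty \theta_0\bigl(s\tfrac{y}{|y|}\bigr)s^2\dd s,
\]
and a short computation in spherical coordinates (interchanging the $r$- and $s$-integrals) gives $\int_{|y|\ge 2R}|V(y)||y|^{-4}\dd y\le CR^{-3}\|\theta_0\|_1$, so the whole double integral is bounded by a constant multiple of $\|\theta_0\|_1$.

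Once integrability is secured I would derive~\eqref{declin} directly, without invoking tempered distributions, by exploiting the Taylor identity $K_{j,3}(x-y,t)-K_{j,3}(x,t)=-\int_0^1 y\cdot\nabla K_{j,3}(x-\lambda y,t)\dd\lambda$. Multiplying by $\theta_0(y)$, integrating in $y$, performing the change of variable $z=\lambda y$ (so that $\dd y=\lambda^{-3}\dd z$) in each resulting term, and identifying the inner $\lambda$-integral with $V_h(z)=-z_h\int_0^1\theta_0(z/\lambda)\lambda^{-4}\dd\lambda$ leads to
\[
K_{j,3}(\cdot,t)*\theta_0(x)-\Bigl(\int\theta_0\Bigr)K_{j,3}(x,t)=\sum_{h=1}^3\int\partial_h K_{j,3}(x-z,t)\,V_h(z)\dd z,
\]
which is~\eqref{declin}. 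The exchange of $y$- and $\lambda$-integrations is justified by the absolute convergence established above. The main obstacle is precisely this integrability step: under the mere hypothesis $\theta_0\in L^1$, $V$ need not lie in any $L^p(\R^3)$, so the standard Young inequality is unavailable, and it is the polar representation of $V$ combined with the $|x|^{-4}$ pointwise decay of $F$ that makes the argument go through.
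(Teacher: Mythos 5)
Your proof is correct and follows essentially the same route as the paper: local integrability of $F(\cdot,t)*V$ is obtained by the same kind of near/far splitting that exploits the structure of $V$ and the $|x-y|^{-4}$ decay of $F$, and the identity \eqref{declin} comes from the decomposition of Proposition~\ref{pro:dirac} with the derivative transferred onto the kernel, exactly as in the paper. Your Taylor-plus-change-of-variables derivation of \eqref{declin} merely unwinds the paper's one-line convolution of the distributional identity with $K_{j,3}(\cdot,t)$, and is if anything slightly more careful, since your near/far estimates in fact control the absolutized integrand needed to justify the interchange of the $y$- and $\lambda$-integrations.
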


\begin{proof}
 To establish that the convolution integral $F(\cdot,t)*V(x)$
 defines a locally integrable function for all $t>0$,  by~\eqref{kerF} and~\eqref{poifk}
 it is sufficient to prove that, for all $R>0$,
 the integral $\int_{|x|\le R} \int (1+|x-y|)^{-4}|V(y)|\dd y$ converges. Indeed,
 \[
 \begin{split}
\int_{|x|\le R}&\int (1+|x-y|)^{-4}|V(y)|\dd y\dd x\\
&\le C\int_{|x|\le R}\int\!\!\!\int_0^1 (1+|x-\lambda y|)^{-4}|y|\,|\theta_0(y)|\dd\lambda\dd y\dd x\\
&\le CR^3\int_0^1\int_{|y|\le 2R/\lambda}|y|\,|\theta_0(y)|\dd y\dd\lambda
   +CR^3 \int_0^1\int_{|y|\ge 2R/\lambda} \lambda^{-4}|y|^{-3}\,|\theta_0(y)|\dd y\dd\lambda\\
&\le CR^3\int\biggl(\int_0^{2R/|y|}\dd \lambda\biggr) |y|\,|\theta_0(y)|\dd y
   +CR^3\int_{|y|\ge 2R}\biggl(\int_{\lambda\ge2R/|y|}\lambda^{-4}\dd \lambda \biggr)|y|^{-3}\,|\theta_0(y)|\dd y\\
&\le CR^4\int|\theta_0(y)|\dd y
   +C\int_{|y|\ge 2R}\,|\theta_0(y)|\dd y<\infty\\
\end{split}
 \]
Owing to  decomposition~\eqref{diracdec}, 
$[K(\cdot,t)*\theta_0e_3]_j=(\int\theta_0)(K_{j,3}(\cdot,t))*\delta+ \div [K_{j,3}(\cdot,t))* V]$.
The conclusion of the lemma follows.
\end{proof}

\begin{lemma}
 \label{lem:fv-r}
 Let $1\le p\le \infty$. Assume that $\theta_0\in L^1(\R^n)$ and $\int |x|^q\,|\theta_0(x)|^q\dd x<\infty$ for some $1\le q<3/2$, with $q\le p$. Let $V$ be related to $\theta_0$ as before.
 Then, for all $A>0$ and all $t>0$, we have
\begin{equation}
 \label{fv-r}
 \lim_{t\to\infty} t^{\frac32(1-1/p)} \| F(\cdot,t)*V\|_p=0,
 \qquad 1\le p\le\infty.
\end{equation}
\end{lemma}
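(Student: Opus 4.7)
The plan is to combine Young's convolution inequality, the scaling of $F(\cdot,t)$ from~\eqref{kerF}, and the Duoandikoetxea--Zuazua bound~\eqref{zua} on $V$. The assumption $\int|x|^q|\theta_0(x)|^q\dd x<\infty$ is precisely the statement $|x|\theta_0\in L^q(\R^3)$, and the restriction $1\le q<3/2$ is exactly what is needed in order to invoke~\eqref{zua}. This immediately gives $\|V\|_q\le C\,\|\,|x|\theta_0\|_q<\infty$, so that $V$ is genuinely an $L^q$ function and not just a distribution.

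Next I would choose the Young exponent $r\in[1,\infty]$ via $1+1/p=1/r+1/q$. The hypothesis $1\le q\le p\le \infty$ makes $r$ well-defined and admissible. Using the scaling $F(x,t)=t^{-2}F(x/\sqrt t,1)$ from~\eqref{kerF} and $F(\cdot,1)\in L^1\cap L^\infty$, a standard change of variables gives
\[
\|F(\cdot,t)\|_r=C_r\,t^{-2+3/(2r)},\qquad C_r<\infty.
\]
Young's inequality then yields
\[
\|F(\cdot,t)*V\|_p\le \|F(\cdot,t)\|_r\,\|V\|_q\le C\,t^{-2+3/(2r)}.
\]

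The only remaining task is an arithmetic check on the exponents: using $1/r=1+1/p-1/q$, one computes
\[
\tfrac{3}{2}(1-1/p)+\bigl(-2+3/(2r)\bigr)=1-\tfrac{3}{2q},
\]
which is strictly negative since $q<3/2$. Therefore $t^{\frac{3}{2}(1-1/p)}\|F(\cdot,t)*V\|_p\le C\,t^{1-3/(2q)}\to 0$ as $t\to\infty$, which is~\eqref{fv-r}. There is no real obstacle in this argument; the only subtle point is recognising that the hypothesis on $|x|\theta_0$ is tailored exactly to~\eqref{zua}, and that admissibility of the Young triple $(p,q,r)$ forces precisely the condition $q\le p$ stated in the lemma, while the gain $t^{1-3/(2q)}$ comes entirely from having $q$ strictly below the critical exponent $3/2$.
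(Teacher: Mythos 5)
Your argument is correct and coincides with the paper's own proof: both rest on the bound $\|V\|_q\le C\|\,|x|\theta_0\|_q$ from~\eqref{zua}, Young's inequality with $1+\frac1p=\frac1r+\frac1q$, the scaling $\|F(\cdot,t)\|_r\le Ct^{-2+3/(2r)}$ from~\eqref{kerF}, and the observation that the resulting exponent $1-\frac{3}{2q}$ is negative precisely because $q<3/2$. Your explicit check of the admissibility of the Young triple (which is where $q\le p$ enters) is a detail the paper leaves implicit, but the route is the same.
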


\begin{proof}
Recalling the estimate $\int|V|^q\le \int |x|^q|\theta_0|^q$, valid for $1\le q<3/2$  (see~\eqref{zua}),
one just needs to apply Young convolution inequality to deduce that, for $1+\frac1p=\frac1r+\frac1q$,
\[ 
\| F(\cdot,t)*V\|_p\le C\|F(t)\|_r\le Ct^{-2+\frac32(1+\frac1p-\frac1q)}.
\]
As $1\le q<3/2$, this implies the assertion of the Lemma.
\end{proof}

\begin{lemma}
 \label{lem:mainte}
Let $1<p\le \infty$, 
The kernel $K(\cdot,t)$ of the operator $e^{t\Delta}\P$. There exist two positive constants $c,A_0>0$
such that for all $A\ge A_0$,
\begin{equation}
 \label{mainte}
\begin{split}
& t^{\frac{3}{2}(1-1/p)}\|K(t)\|_{L^p(B_{A\sqrt t}^c)}\ge cA^{-3+3/p}.
\end{split}
 \end{equation} 
\end{lemma}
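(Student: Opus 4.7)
The plan is to reduce by scaling to an estimate at time $t=1$, extract the leading homogeneous profile of the Oseen kernel at spatial infinity, and then integrate in polar coordinates.

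\textbf{Step 1 (Scaling).} The scaling relation $K(x,t)=t^{-3/2}K(x/\sqrt t,1)$ from~\eqref{kerK}, combined with the change of variable $y=x/\sqrt t$, gives the exact identity
\[
\|K(\cdot,t)\|_{L^p(B_{A\sqrt t}^c)} = t^{-\frac{3}{2}+\frac{3}{2p}}\,\|K(\cdot,1)\|_{L^p(B_A^c)}.
\]
Hence \eqref{mainte} is equivalent to the time-independent lower bound $\|K(\cdot,1)\|_{L^p(B_A^c)}\ge c\,A^{-3+3/p}$ for $A\ge A_0$.

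\textbf{Step 2 (Homogeneous profile at infinity).} The components of the kernel of $e^{\Delta}\P$ admit the representation
\[
K_{jh}(x,1) = \delta_{jh}G_1(x) + \partial_j\partial_h\int_1^\infty G_s(x)\,\dd s,
\]
obtained from the Helmholtz decomposition $\P=I-\nabla\otimes\nabla\,\Delta^{-1}$ together with $\Delta^{-1}G_1=-\int_1^\infty G_s\,\dd s$. Since $\int_0^\infty G_s(x)\,\dd s=\frac{1}{4\pi|x|}$ (Newtonian potential of $\delta_0$) and both $G_1(x)$ and $\int_0^1 G_s(x)\,\dd s$ are super-exponentially small as $|x|\to\infty$, one gets
\[
K_{jh}(x,1) = \frac{1}{4\pi}\cdot\frac{3x_jx_h-|x|^2\delta_{jh}}{|x|^5} + \text{(exp.\ small)}.
\]
Setting $\sigma=x/|x|$ and taking $j=h=3$, the leading term equals $\frac{1}{4\pi|x|^3}(2\sigma_3^2-\sigma_1^2-\sigma_2^2)$, so there exist an open spherical cap $\Gamma\subset\S^2$ and constants $R_0,c_0>0$ with $|K_{33}(x,1)|\ge c_0|x|^{-3}$ whenever $x/|x|\in\Gamma$ and $|x|\ge R_0$.

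\textbf{Step 3 (Integration).} Taking $A_0\ge R_0$, for $1<p<\infty$ and $A\ge A_0$ we have
\[
\|K(\cdot,1)\|_{L^p(B_A^c)}^p \ge \int_A^\infty\!\int_\Gamma |K_{33}(r\sigma,1)|^p\,r^2\,\dd\sigma\,\dd r \ge c_0^p\,|\Gamma|\int_A^\infty r^{-3p+2}\,\dd r = \frac{c_0^p\,|\Gamma|}{3p-3}\,A^{3-3p},
\]
which yields $\|K(\cdot,1)\|_{L^p(B_A^c)}\ge c'\,A^{-3+3/p}$. The case $p=\infty$ is immediate from the pointwise lower bound on $K_{33}$.

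\textbf{Main obstacle.} The delicate point is Step~2: one must produce a \emph{nonvanishing} homogeneous profile of degree $-3$ in the tail of the Oseen kernel. The explicit identification with second derivatives of the Newtonian potential $1/(4\pi|x|)$ is what makes this transparent, with only super-exponentially small remainders. Note also that the assumption $p>1$ is critically used in Step~3 to ensure convergence of $\int_A^\infty r^{-3p+2}\,\dd r$, in agreement with the well-known fact that $K(\cdot,1)\notin L^1(\R^3)$.
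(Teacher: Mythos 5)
Your proof is correct, and it reaches the paper's conclusion by a somewhat more self-contained route. The paper's proof quotes from \cite{Bra09} the decomposition $K(x,t)=\mathfrak{K}(x)+|x|^{-3}\Psi(x/\sqrt t)$, with $\mathfrak{K}$ homogeneous of degree $-3$ (second derivatives of the fundamental solution of the Laplacian) and $\Psi$ exponentially decaying, and then gets the lower bound from the reverse triangle inequality on $B_{A\sqrt t}^c$, using the decay of $\Psi$ to absorb the remainder for large $A$; this tacitly uses that $\int_{|x|\ge1}|\mathfrak{K}|^p>0$ and is finite (hence $p>1$). You instead scale to $t=1$ and rederive exactly that structure from first principles, writing $K_{jh}(\cdot,1)=\delta_{jh}G_1+\partial_j\partial_h\int_1^\infty G_s\,\mathrm{d}s$ and identifying the tail with $\partial_j\partial_h(4\pi|x|)^{-1}$ up to super-exponentially small errors; you then make the non-degeneracy of the homogeneous profile explicit by exhibiting the factor $3\sigma_3^2-1$ and integrating over a spherical cap where it does not vanish. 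What your version buys is independence from the external reference and an explicit verification of the nonvanishing of the leading profile (precisely for the component $K_{33}$ relevant to the buoyancy term), which the paper leaves implicit; what the paper's version buys is brevity and a bound valid for the full kernel norm without choosing a component or a cap. Both arguments use $p>1$ in the same essential way, for the integrability of $|x|^{-3p}$ at infinity, and your minor omissions (justifying differentiation under the integral in $\int_0^1\partial_j\partial_h G_s\,\mathrm{d}s$ and its Gaussian smallness for $|x|$ large) are routine.
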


\begin{proof}
We know from~\cite{Bra09} that the kernel $K(\cdot,t)$ can be decomposed as
$K(x,t)=\mathfrak{K}(x)+|x|^{-3}\Psi(x/\sqrt{t})$, where the components of $\mathfrak{K}(x)$ are homogeneous function of degree $-3$ (given by second-order derivatives of the fundamental solution of the Laplacian in~$\R^3$),
and $\lim_{x\to\infty}\Psi(x)=0$ (in fact $\Psi$ decays to zero exponentially fast).
\[
\begin{split}
 \|K(t)\|_{L^p(B_{A\sqrt t}^c)}
 &=\biggl(\int_{B_{A\sqrt t}^c} \Bigl| \mathfrak{K}(x)+|x|^{-3}\Psi(x/\sqrt t)\Bigr|^p\dd x\biggr)^{1/p}\\
 &\ge t^{-\frac32(1-1/p)} A^{-3+3/p}
    \biggl|\biggl(\int_{|x|\ge1} | \mathfrak{K}(x)|^p\dd x\biggr)^{1/p}
     -\biggl(\int_{|x|\ge1}\Bigl||x|^{-3}\Psi(Ax)\Bigr|^p\dd x 
    \biggr)^{1/p}\biggr|\\
 &\ge c' t^{-\frac32(1-1/p)} A^{-3+3/p}
\end{split}
\]
where $c'>0$ if $A$ is large enough, because of the decay of $\Psi$.
\end{proof}

\begin{lemma}
\label{lem:hu}
 Let $1<p\le\infty$ and $u_0\in L^3(\R^3)$. Assume also $u_0\in L^p(\R^3)$ if $1<p<3$.
 Then
 \[
 \lim_{t\to+\infty} t^{\frac12(1-3/p)}\|e^{t\Delta}u_0\|_p=0.
 \]
\end{lemma}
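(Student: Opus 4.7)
The heat kernel bound $\|G_t\|_r=C\,t^{-\frac32(1-1/r)}$ combined with Young's inequality yields at best the a priori bound
\[
t^{\frac12(1-3/p)}\|e^{t\Delta}u_0\|_p\le C\|u_0\|_3,
\]
so the key point is to upgrade boundedness to decay. The plan is to split the argument into two regimes according to whether $1<p<3$ or $3\le p\le\infty$.

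The case $1<p<3$ will be immediate: the prefactor $t^{\frac12(1-3/p)}$ tends to zero since $1-3/p<0$, while the contractivity of the heat semigroup on $L^p$ ensures $\|e^{t\Delta}u_0\|_p\le\|u_0\|_p$, so the product goes to~$0$ under the assumption $u_0\in L^p$.

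The case $3\le p\le\infty$ will be handled by a density argument. For $v\in L^1\cap L^3$, applying Young's inequality with $v\in L^1$ gives
\[
\|e^{t\Delta}v\|_p\le \|G_t\|_p\|v\|_1\le C\,t^{-\frac32(1-1/p)}\|v\|_1,
\]
so that $t^{\frac12(1-3/p)}\|e^{t\Delta}v\|_p\le C\,t^{-1}\|v\|_1\to 0$. Given $\varepsilon>0$, using the density of $L^1\cap L^3$ in $L^3$ I would choose $v\in L^1\cap L^3$ with $\|u_0-v\|_3<\varepsilon$, and split
\[
t^{\frac12(1-3/p)}\|e^{t\Delta}u_0\|_p\le t^{\frac12(1-3/p)}\|e^{t\Delta}v\|_p + t^{\frac12(1-3/p)}\|e^{t\Delta}(u_0-v)\|_p.
\]
The first term tends to zero by the calculation above; the second is bounded by $C\|u_0-v\|_3<C\varepsilon$ via the a priori estimate. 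Letting $\varepsilon\to 0$ yields the claim.

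There is no real obstacle here: the only subtle point is that Young's inequality applied to $u_0\in L^3$ alone is scale-invariant and only produces boundedness, which is why one must exploit extra integrability on a dense subset to extract the decay. Everything else is a routine combination of semigroup boundedness and heat-kernel $L^p$-estimates.
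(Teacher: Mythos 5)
Your proof is correct and follows essentially the same route as the paper: for $1<p<3$ one combines the negative power of $t$ with $L^p$-contractivity of the heat semigroup, and for $3\le p\le\infty$ one approximates $u_0$ in $L^3$ by an element of $L^1\cap L^3$ and plays the $L^1$--$L^p$ heat decay against the uniform $L^3$--$L^p$ bound. If anything, your write-up states the approximation step (density in the $L^3$-norm, with the error controlled by the scale-invariant $L^3$--$L^p$ estimate) slightly more carefully than the paper's own sketch.
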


\begin{proof}
If $1< p<3$, the conclusion just follows from the inequality $\|e^{t\Delta}u_0\|_p\le\|u_0\|_p$.
If $3\le p\le\infty$, this is well known: one approaches in the $L^3$-norm,
$u_0$ by a sequence of functions 
in $L^1\cap L^3$, with $L^3$ norm not exceeding $\|u_0\|_3$ and next applies 
the usual $L^3$-$L^p$ and $L^1$-$L^p$
heat estimates. In fact, let $\psi \in L^{1}\cap L^{p}$ such that
$
\Vert \psi - u_{0} \Vert_{p}\leq \varepsilon.
$
Hence, 
$$
t^{\frac{1}{2}-\frac{3}{2p}}\Vert e^{t\Delta}u_{0}\Vert_{p} \leq t^{\frac{1}{2}-\frac{3}{2p}}\varepsilon+t^{-1}\Vert \psi\Vert_{L^{1}},
$$
which proves the claimed result.
\end{proof}

\begin{proof}[Proof of Theorem~\ref{th:lube}]
By our assumptions, the data satisfy  the additional conditions $\int|x|^q\min\{1,|\theta_0(x)|^q\}\dd x<\infty$ 
for some $1\le q<3/2$ such that $q\le p$. Moreover, when $1<p<3$, $u_0\in L^p(\R^3)$.
The solution $(u,\theta)$, is such that $\sup_{t>0}\|\theta(t)\|_1<\infty$
and $\|\theta(t)\|_\infty\to0$. In particular, after some time $t_q>0$,  $\|\theta(t)\|_q\le 1$ for all $t\ge t_q$.
By a time-translation argument we can consider $(u(t_q),\theta(t_q))$ as our new initial datum.
Notice that $\int \theta_0=\int\theta(t_q)$ because the spatial mean of the temperature is preserved by the Boussinesq flow. 
So, without loss of generality,
we can work under the seemingly stronger condition $\int|x|^q|\theta_0|^q(x)\dd x<\infty$.
This observation will be useful later on, when we will apply Lemma~\ref{lem:fv-r}, where
such stronger condition was needed.

Let us apply Lemma~\ref{lem:declin} and write the velocity field as
\begin{equation}
 \label{idenu}
\begin{split}
 u(t)&=e^{t\Delta}[u_0+t\P\theta_0e_3]-B_1(u,u)-B_2(u,\theta)\\
 &=\biggl(\int\theta_0\Bigr)t\,K(t)+t\,F(t)*V+e^{t\Delta}u_0-[B_1(u,u)+B_2(u,\theta)],
\end{split}
\end{equation}
where the notations here are the same as before.
Fix $A>0$ large enough.
Multiplying by $t^{\frac12(1-3/p)}$, taking the $L^p(B_{A\sqrt t}^c)$-norm and applying the triangle inequality,
leads to
\[
\begin{split}
t^{\frac12(1-3/p)}&\|u(t)\|_{L^p(B_{A\sqrt t}^c)}\\
&\ge   t^{\frac12(1-3/p)}\biggl[
        \,\bigl|\textstyle\int\theta_0\bigr|\,t\,\|K(t)\|_{L^p(B_{A\sqrt t}^c)}
        -\|B_1(u,u)(t)+B_2(u,\theta)(t)\|_{L^p(B_{A\sqrt t}^c)}+o(1)\biggr]\\
&\ge c\bigl|\textstyle\int\theta_0\bigr|A^{-3+3/p}-CA^{-4+3/p}+o(1)\\
&\ge \textstyle \frac{c}{2}\bigl|\textstyle\int\theta_0\bigr| A^{-3+3/p} +o(1). 
\end{split}
\]
Here the $o(1)$-term (as $t\to+\infty$) includes the contributions of 
two terms $F(t)*V$ and $e^{t\Delta}u_0$, that can indeed be neglected accordingly to
Lemma~\ref{lem:fv-r} and Lemma~\ref{lem:hu}.
Here we applied also  Lemma~\ref{lem:b12uu} and  Lemma~\ref{lem:mainte}.
From the above inequalities we deduce that, for $t>0$ large enough,
\[
t^{\frac12(1-3/p)}\|u(t)\|_{L^p}\ge \frac{c}{4}\bigl|\textstyle\int\theta_0\bigr| A^{-3+3/p}.
\]
The assertion of the theorem is now proved.
\end{proof}

\begin{remark}
The result of the present paper seems to be quite specific of the fact that we set the problem in the whole space. In bounded domains exponential decays are expected due to Poincar\'e's inequality. More interesting is the case of other unbounded domains: in the half-space case with Dirichlet boundary conditions the energy does not grow large anymore, but it decays to zero. See~\cite{HanS14}. 
So the long time behavior of Boussinesq flows 
in the half-space and in the whole space are completely different.
From the physical point of view, this different behavior seems to be related to the energy dissipation that
occurs in the boundary layer. Indeed, experimental analyses of energy spectra of viscous flows are available and put in evidence that walls selectively damp out the higher frequencies: the low-frequency content increases and the high-frequency content decreases as the boundary layer is traversed from the freestream to the wall. See, e.g., \cite{HerW07}.
In the case of the exterior domain problem, the long time behavior of the energy is addressed in~\cite{Han12}: therein, the decay of the energy is obtained under additional (non-generic) conditions on the temperature; in the absence of such conditions energy growth is expected, because there is not enough boundary to maintain the dissipation mechanisms, but this has not been rigorously established yet.
\end{remark}

\section{Acknowledgements}
The authors would like to thank the referees for their careful reading and useful suggestions, that have been incorporated in this revised version of the manuscript.

\end{document}